\newcommand{\strih}[2]{{\ulcorner{#1}\urcorner}_{#2}}
\newcommand{\dstrih}[2]{{\llcorner{#1}\lrcorner}_{#2}}
\newcommand{\gstrih}[2]{g_{{\ulcorner{#1}\urcorner}_{#2}}}
\newcommand{\rimply}{\Rightarrow}
\newcommand{\implik}{\Longrightarrow}
\@undefined\usepackage[usenames,dvips]{color}
\else\usepackage[usenames,dvipsnames]{color}
\definecolor{ChadDarkBlue}{rgb}{.1,0,.2}  
\definecolor{ChadBlue}{rgb}{.1,.1,.5}  
\definecolor{ChadRoyal}{rgb}{.2,.2,.8}  
\definecolor{ChadGreen}{rgb}{0,.4,0}    
\definecolor{ChadRed}{rgb}{.5,0,.5}  
\def\zmena#1{{#1}} 
\def\zruseno#1{} 
\title {\bf On tense MV-algebras}
\author {Michal Botur, Jan Paseka}
\thanks{Both authors gratefully acknowledge  the support by ESF Project CZ.1.07/2.3.00/20.0051
Algebraic methods in Quantum Logic of the Masaryk University. 
M. Botur gratefully acknowledges Financial Support 
of the  the Grant Agency of the Czech
Re\-pub\-lic un\-der the grant
No.~   GA\v CR P201/11/P346.}
\address{Palack\' y University Olomouc, Faculty of Sciences, t\v r. 17.listopadu 1192/12, Olomouc 771 46, Czech Republic}
\email{michal.botur@upol.cz}
\address{Department of Mathematics and Statistics, Faculty of Science,
Masaryk University, {Kotl\'a\v r{}sk\' a\ 2}, 611~37 Brno, 
Czech Republic}
\email{paseka@math.muni.cz}
\keywords{Tense operators, MV-algebra, predicate logic}
\subjclass{06D35, 06F35, 03G10}
\begin{document}

\begin{abstract}
The main aim of this article is to study tense MV-algebras which are just MV-algebras with 
new unary operations $G$ and $H$ which express a universal time quantifiers. 
Tense MV-algebras were introduced by D.~Diagonescu and G.~Georgescu. 
Using a new notion of an fm-function between MV-algebras 
we \zruseno{will prove} \zmena{settle a half of their Open problem  about representation 
for some classes of tense MV-algebras,  i.e., we show}  
that any tense semisimple MV-algebra is induced by a time frame 
analogously to classical works in this field of logic. As a by-product we obtain a new characterization  
of extremal states on MV-algebras.
\end{abstract}

\maketitle

\newtheorem{definition}{Definition}
\newtheorem{proposition}{Proposition}
\newtheorem{theorem}{Theorem}
\newtheorem{lemma}{Lemma}
\newtheorem{claim}{Claim}
\newtheorem{cor}{Corollary}
\newtheorem{corollary}{Corollary}
\newtheorem*{example}{Example}
\newtheorem{remark}{Remark}
\newtheorem{open}{Open problem}

\section{Introduction}

Propositional logic usually do not incorporate the dimension of time. To obtain  the \zmena{so-called} 
{\it tense logic}, the propositional calculus is enriched by adding new unary operators $G$ and $H$ 
(and new derived operators $F:=\neg G\neg$ and $P:=\neg H \neg,$ where $\neg$ 
\zmena{denotes} the classical negation connective) which are called {\it tense operators}. The operator $G$ usually expresses the quantifier `it will still be the case that' and $H$ expresses `it has always been the case that'. Hence, $F$ and $P$ are in fact tense existential quantifiers.

If $T$ is non-empty set and $\rho$ a binary relation on $T,$ the couple $(T,\rho)$  is called a {\it time frame}. 
For a given logical formula $\phi$ of our propositional logic and for $t\in T$ we say that $G(\phi)(t)$ is valid if $\phi(s)$ 
is valid for any $s\in T$ with $t\rho s.$ Analogously, $H(\phi)(t)$ is valid if $\phi(s)$ is valid for any $s\in T$ with $s\rho t.$ 
Thus $F(\phi)(t)$ is valid if there exists $s\in T$ with $t\rho s$ and $\phi (s)$ is valid and 
analogously $P(\phi)(t)$ is valid if there exists $s\in T$ with $s \rho t$ and $\phi(s)$ is valid in the propositional logic.

Study of tense operators was originated in 1980's, see e.g. a compendium \cite{2}. Recall that for a classical propositional calculus represented by means of a Boolean algebra $\mathbf B=(B,\vee,\wedge,\neg,0,1),$ tense operators were axiomatized in \cite{2} by the following axioms:
\begin{itemize}
\item[(B1)] $G(1)=1,$ $H(1)=1,$
\item[(B2)] $G(x\wedge y)= G(x)\wedge G(y),$ $H(x\wedge y)=H(x)\wedge H(y),$
\item[(B3)] $\neg G\neg H (x)\leq x,$ $\neg H\neg G (x)\leq x.$
\end{itemize}
For Boolean algebras, the axiom (B3) is equivalent to
\begin{itemize}
\item[(B3')] $G(x)\vee y=x\vee H(y).$
\end{itemize}

To introduce tense operators in non-classical logics, some more axioms must be added  on $G$ and $H$ to express connections with additional operations or logical connectives. For example, for intuitionistic logic (corresponding to Heyting algebras) it was done in \cite{3}, for algebras of logic of quantum mechanics see \cite{4} and \cite{5}, for so called basic algebras it was done in \cite{1}, for other interesting algebras the reader is referred to \cite{8}, \cite{9} and \cite{10}.

Among algebras connected with many-valued logic, let us mention MV-algebras and \L uka\-siewicz-Moisil algebras. Tense operators for the previous cases were introduced and studied in \cite{6} and \cite{7}. Contrary to Boolean algebras where the representation problem \zmena{through a time frame} 
is solved completely, authors in \cite{7} only mention that this problem for MV-algebras was not treated. 
Hence, our \zmena{main goal is to 
find a suitable time frame for given tense operators on a semisimple MV-algebra, i.e., to 
solve the representation problem for semisimple MV-algebras.}

This problem was solved by the first author\footnote{It is not published.} for such tense MV-algebras that the 
tense operators $G$ and $H$ preserve all powers of the operations $\oplus$ and $\odot$. The second author generalized his results replacing original term $t_q$ (see \cite{paseka})  constructed for any rational $q$ by the 
Teheux's term (see Section 2.2 of this paper or \cite{paseka}). Here, we present more general concept 
of used ideas for obtaining stronger results. The main representation theorem for semisimple tense MV-algebras and 
second author's results are corollaries of this.

The paper is divided as follows. In Section \ref{Preliminaries} we recall the basic fundamental results on MV-algebras and tense MV-algebras, and in this way fix the notation and terminology.  
Afterwards we summarize some folklore 
results on MV-terms $t_r(x)$ produced only from operations of the form 
$x\oplus x$ and $x\odot x$. Then in Section \ref{semistate} we introduce a 
notion of a semi-state on an MV-algebra and we show that any  semi-state is a meet of 
extremal states. Also, we give a new characterization of extremal states on MV-algebras.
In Section \ref{construction} we 
introduce the notions of an {fm-function between MV-algebras} 
({strong fm-function between MV-algebras}). We establish  
a canonical construction of strong fm-function between MV-algebras. 

In Section \ref{repres} we solve the representation problem for  fm-function 
between semisimple MV-algebras. Moreover, we show that in this case 
they coincide with strong  fm-functions. 
Finally we prove the representation theorem for semisimple tense MV-algebras.

\section{Preliminaries}\label{Preliminaries}
\subsection{MV-algebras and tense operators}
The concept of MV-algebras was introduced by C.C. Chang in \cite{Chan} as algebraic counterpart 
of the \L uka\-siewicz multi valued logic (see \cite{Luk}). 
Recall, that by an {\em MV-algebra} is meant an algebra $\mathbf A=(A,\oplus,\neg,0)$ of type $(2,1,0)$ satisfying the axioms:
\begin{itemize}
\item[(MV1)] $x\oplus y = y\oplus x,$
\item[(MV2)] $x\oplus (y\oplus z) =(x\oplus y)\oplus z,$
\item[(MV3)] $x\oplus 0=x,$
\item[(MV4)] $\neg\neg x=x,$
\item[(MV5)] $x\oplus 1=1,$ where $1:=\neg 0,$
\item[(MV6)] $\neg(\neg x\oplus y)\oplus y = \neg (\neg y\oplus x)\oplus x.$
\end{itemize} 

The order relation $\leq$ can be introduced on any MV-algebra $\mathbf A$ by the stipulation $$x\leq y\mbox{ if and only if } \neg x\oplus y =1.$$ Moreover, the ordered set $(A,\leq)$ can be organized into a bounded lattice $(A,\vee,\wedge,0,1)$ where $$x\vee y = \neg (\neg x\oplus y)\oplus y \mbox{ and } x\wedge y = \neg(\neg x\vee \neg y).$$

Besides of these, we can introduce two more interesting operation $\odot$ and $\rightarrow$ by setting $$x\odot y :=\neg(\neg x\oplus\neg y) \mbox{ and }x\rightarrow y := \neg x\oplus y.$$ 
Those operations are connected by the adjointness property
$$x\odot y\leq z\mbox{ if and only if } x\leq y\rightarrow z.$$

An MV-algebra is said to be {\em linearly ordered} 
(or an {\em MV-chain}) if its order is linear.

Given a positive integer $n\in {\mathbb N}$, 
we let $n\times x= x \oplus x \oplus x \cdots
  \oplus x$, $n$ times, $x^{n}= x \odot x \odot x \cdots
  \odot x$, $n$ times, $0x = 0$ and $x^{0} = 1$.

In every MV-algebra the following equalities hold: 
\begin{enumerate}
\item[{\rm (D1)}] $a\oplus  \bigvee_{i\in I} x_i=%
\bigvee_{i\in I} (a\oplus x_i)$, 
$a\oplus  \bigwedge_{i\in I} x_i=%
\bigwedge_{i\in I} (a\oplus x_i)$, 
\item[{\rm (D2)}] $a\odot  \bigvee_{i\in I} x_i=%
\bigvee_{i\in I} (a\odot x_i)$, 
$a\odot  \bigwedge_{i\in I} x_i=%
\bigwedge_{i\in I} (a\odot x_i)$,
\end{enumerate}

\noindent{}whenever the respective sides are defined.

An element $a$ of an MV-algebra $\mathbf A$ is said to be {\it Boolean} if
$a\oplus a = a$. We say that an MV-algebra $\mathbf A$ is {\it Boolean} if
every element of $\mathbf A$ is Boolean. For an MV-algebra $\mathbf A$, 
the set $B({\mathbf A})$ of all Boolean elements is a Boolean algebra. 

{\em Morphisms of MV-algebras} (shortly {\em MV-morphisms}) are defined as usual, \zmena{i.e.,} 
they are functions which preserve the binary operations $\oplus$ and $\odot$, the unary operation 
$\neg$ and the constants $0$ and $1$. 

Tense MV-algebras were introduced by D. Diagonescu and G.Georgescu in \cite{7}. 

\begin{definition}\label{tense}{\rm 
Let us have an MV-algebra $\mathbf A=(A,\oplus,\neg,0)$. We say that $(\mathbf A,G,H)$ is a 
{\em tense MV-algebra} and $G$ and $H$ are {\em tense operators} 
if $G$ and $H$ are a unary operations on $A$ satisfying:
\begin{itemize}
\item[(i)] $G(1)=H(1)=1,$
\item[(ii)] $G(x)\odot G(y)\leq G (x\odot y),$ $H(x)\odot H(y)\leq H(x\odot y),$
\item[(iii)] $G(x)\oplus G(y)\leq G (x\oplus y),$ $H(x)\oplus H(y)\leq H(x\oplus y),$
\item[(iv)] $G(x)\odot G(x)=G(x\odot x),$ $H(x)\odot H(x)=H(x\odot x)$,
\item[(v)] $G(x)\oplus G(x)=G(x\oplus x),$ $H(x)\oplus H(x)=H(x\oplus x)$,
\item[(vi)] $\neg G\neg H (x)\leq x,$ $\neg H\neg G (x)\leq x.$ 
\end{itemize}}
\end{definition}


Applying the axioms (i) and (ii), we get immediately monotonicity of the operators $G$ and $H.$ Thus, if $x\leq y$ for any $x,y\in A$ then $G(x)\leq G(y)$ and $H(x)\leq H(y).$ 

We note that the original definition of tense MV-algebras \cite[Proposition 5.1, Remark 5.1]{7} use alternative inequalities
\begin{itemize}
\item[(ii')] $G(x\rightarrow y)\leq G(x)\rightarrow G(y),$ $H(x\rightarrow y)\leq H(x)\rightarrow H(y),$
\item[(vi')] $x\leq G\neg H\neg x,$ $x\leq H\neg G\neg x.$
\end{itemize}

Monotonicity of the operators $G$ and $H$ and adjointness property give equivalence of (ii) and (ii'). Using double negation law and antitonicity of the negation we obtain equivalence of (vi) and (vi').

The following theorem \zmena{describes} the most important construction of tense MV-algebras.

\begin{theorem}\cite{7}
Let $\mathbf A$ be a linearly ordered complete MV-algebra and let $T$ be any set with a binary relation $\rho\subseteq T^2.$ Then $(\mathbf A^T,G^*,H^*)$ where operations $G^*$ and $H^*$ are calculated point-wise
$$G^*(x)(i):=\bigwedge_{i\rho j} x(j)\quad \mbox{ and }\quad H^*(x)(i):= \bigwedge_{j\rho i} x(j)$$
is a tense MV-algebra. In this case we say that the tense MV-algebra $(\mathbf A^T,G^*,H^*)$ is induced by the frame $(T,\rho).$
\end{theorem}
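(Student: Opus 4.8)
The plan is to verify the six defining axioms of a tense MV-algebra directly for the point-wise operators $G^*$ and $H^*$ on $\mathbf{A}^T$. Since $\mathbf{A}$ is linearly ordered and complete, the power $\mathbf{A}^T$ is a complete MV-algebra in which all lattice operations, $\oplus$, $\odot$, and $\neg$ are computed coordinatewise, and all infima and suprema over arbitrary index sets exist; moreover, because $\mathbf{A}$ is a \emph{chain}, arbitrary meets and joins in $\mathbf{A}$ (hence in each coordinate) distribute over $\oplus$ and $\odot$ via (D1) and (D2) without any restriction, and this is precisely the feature that makes the argument go through. By symmetry between $G^*$ and $H^*$ (interchanging $\rho$ with its converse relation), it suffices to check each axiom for $G^*$.

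First I would dispatch (i): $G^*(1)(i) = \bigwedge_{i\rho j} 1 = 1$ in every coordinate, using the convention that an empty meet is the top element $1$. For (ii) and (iii), I would use that $G^*(x)(i)\odot G^*(y)(i) = \bigl(\bigwedge_{i\rho j} x(j)\bigr)\odot\bigl(\bigwedge_{i\rho j} y(j)\bigr)$; applying (D2) twice (legitimate in the chain $\mathbf{A}$), this equals $\bigwedge_{i\rho j}\bigwedge_{i\rho k}\bigl(x(j)\odot y(k)\bigr)$, which is a meet over a larger index set than $\bigwedge_{i\rho j}\bigl(x(j)\odot y(j)\bigr) = G^*(x\odot y)(i)$, and hence is $\leq$ it. The inequality (iii) for $\oplus$ is entirely analogous using (D1). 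For (iv) and (v), the same computation specialized to $y = x$ collapses the double meet $\bigwedge_{i\rho j}\bigwedge_{i\rho k}\bigl(x(j)\odot x(k)\bigr)$: here linearity of $\mathbf{A}$ is used crucially, since for a chain $a\odot b \geq a\odot a$ or $a\odot b \geq b\odot b$ according to whether $a\leq b$ or $b\leq a$, so $\bigwedge_{j,k}(x(j)\odot x(k)) = \bigwedge_j (x(j)\odot x(j))$, giving the equality $G^*(x)\odot G^*(x) = G^*(x\odot x)$; symmetrically $\bigwedge_{j,k}(x(j)\oplus x(k)) = \bigwedge_j(x(j)\oplus x(j))$ yields (v).

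The remaining axiom (vi), $\neg G^*\neg H^*(x)\leq x$, is the step I expect to be the main (though still routine) obstacle, since it is the only one genuinely involving the interaction of $G^*$ and $H^*$ rather than a single operator. I would unwind it coordinatewise: $\neg G^*\neg H^*(x)(i) = \neg\bigwedge_{i\rho j}\neg H^*(x)(j) = \bigvee_{i\rho j} H^*(x)(j) = \bigvee_{i\rho j}\bigwedge_{k\rho j} x(k)$. For any $j$ with $i\rho j$, the inner meet $\bigwedge_{k\rho j} x(k)$ is taken over a set of indices $k$ that includes $k = i$ (since $i\rho j$), so $\bigwedge_{k\rho j} x(k)\leq x(i)$; taking the join over all such $j$ (and noting the join is empty, hence equal to $0\leq x(i)$, when no such $j$ exists) gives $\neg G^*\neg H^*(x)(i)\leq x(i)$, which is the desired inequality in each coordinate. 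The companion inequality $\neg H^*\neg G^*(x)\leq x$ follows by the same argument with $\rho$ replaced by its converse. Collecting all six verifications establishes that $(\mathbf{A}^T, G^*, H^*)$ is a tense MV-algebra.
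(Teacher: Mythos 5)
Your verification is correct: each axiom of Definition \ref{tense} is checked coordinatewise in the right way, with the completeness of $\mathbf A$ guaranteeing the meets exist and the (D1)--(D2) distributivity, and linearity used exactly where it is needed, namely to collapse the double meet in axioms (iv) and (v), while (vi) follows from the observation that $i\rho j$ puts $x(i)$ among the terms of the inner meet. The paper itself does not prove this statement but quotes it from Diaconescu and Georgescu \cite{7}, and your direct pointwise argument is the standard proof of that result, so there is nothing substantive to compare beyond noting that (ii) and (iii) could also be obtained from monotonicity of $\odot$ and $\oplus$ alone, without invoking (D1)--(D2).
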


We will prove that any couple of tense operators on any semisimple MV-algebra can be embedded into $([0,1]^T,G^*, H^*)$ where $G^*$ and $H^*$ are  \zmena{tense} operators induced by some time frame 
$(T,\rho)$. \zmena{For related results on more general  operators on any semisimple MV-algebra 
see  \cite{paseka}.} 

\subsection{Dyadic numbers and MV-terms}
\label{Dyadic}

The contents of this part summarizes the basic results about some folklore 
results of some MV-terms from \cite{teheux} and \cite{pascal}. The techniques described here  
have been used already in \cite{BK}
and later, e.g., in \cite{Mun}, \cite{DN}, \cite{hansoul} and \cite{teheux2}.


We remark some concepts introduced by B. Teheux in \cite{teheux}.   The set $\mathbb D$ of dyadic numbers is the set of the rational numbers that can be written as a finite sum of power of 2.
If $a$ is a number of  $[0, 1]$, a dyadic decomposition of $a$ is a sequence 
$a^{*} = (a_i )_{ i \in {\mathbb N}}$ of elements of $\{ 0, 1 \}$ such that 
$a = \sum^{\infty}_{i=1} a_i 2^{-i}$. We denote by $a^{*}_ i$ the $i^{\text{th}}$ element of any sequence (of length greater than $i$) $a^{*}$.
If $a$ is a dyadic number of $[0, 1]$, then $a$ admits a unique finite dyadic decomposition, 
called the {\em dyadic decomposition of} $a$.
If $a^{*}$  is a dyadic decomposition of a real $a$ and if $k$ is a positive integer 
then we denote by $\strih{a^{*}}{k}$ the finite sequence 
$(a_1, \dots, a_k)$ defined by the first $k$ elements of $a^{*}$ and by 
$\dstrih{a^{*}}{k}$ the dyadic number 
$\sum^{k}_{i=1} a_i 2^{-i}$.
We denote by $f_0(x)$ and $f_1(x)$ the terms $x\oplus x$ and 
$x\odot x$ respectively, and by
$T_{\mathbb D}$  the clone generated by $f_0(x)$ and $f_1(x)$. 

We also denote by $g_{.}$ the mapping between the set of finite sequences 
of elements of $\{0, 1\}$ (and thus of dyadic numbers in $[0, 1]$) 
and $T_{\mathbb D}$ defined by:
$$g_{(a_1,...,a_k)} = f_{a_k} \circ \dots \circ f_{a_1}$$
for any finite sequence $(a_1, . . . , a_k)$ of 
elements of $\{0, 1\}$.  If 
$a = \sum^{k}_{i=1} a_i 2^{-i}$, we sometimes
write $g_a$ instead of $g_{(a_1,...,a_k)}$. 

\zruseno{We also denote,
for a dyadic number $a\in {\mathbb D}\cap [0,1]$ and a positive integer 
$k\in {\mathbb N}$ such that $2^{-k}\leq 1-a$, by 
$l(a,k):[a, a+2^{-k}]\to [0,1]$ a linear function defined as follows 
$l(a,k)(x)=2^{k}(x-a)$ for all $x\in [a, a+2^{-k}]$.}

\begin{lemma}{\rm \cite[Lemma 1.14]{teheux}}  If $a^{*} = (a_i)_{i \in\mathbb N}$  
and $x^{*} = (x_i )_{i \in\mathbb N}$ are dyadic decompositions of two elements
of $a, x\in [0, 1]$, then, for any positive integer ${k\in\mathbb N}$,
$$\gstrih{a^{*}}{k}
(x) = \left\{%
\begin{array}{l l}
1& \text{if}\ x > \sum^{k}_{i=1} a_i 2^{- i} + 2^{-k}\\
0& \text{if}\ x < \sum^{k}_{i=1} a_i 2^{- i} 
\end{array}
\right.
$$
\end{lemma}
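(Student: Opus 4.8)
The plan is to analyze the term $g_{\strih{a^*}{k}}$ by understanding precisely how the building-block functions $f_0(x) = x \oplus x$ and $f_1(x) = x \odot x$ act on the unit interval $[0,1]$ in the standard MV-algebra. The key observation is that $f_0$ is the map $x \mapsto \min(2x, 1)$, which doubles values in $[0,1/2]$ and saturates to $1$ above $1/2$, while $f_1$ is the map $x \mapsto \max(2x - 1, 0)$, which is zero on $[0,1/2]$ and equals $2x-1$ on $[1/2,1]$. In either case, the function is a piecewise-linear ``tent-free'' doubling map, and the crucial structural fact is: on the interval where $f_{a_1}$ is \emph{not} already constant (i.e.\ neither clamped to $0$ nor to $1$), it acts as the affine rescaling $x \mapsto 2x - a_1$ sending $[\tfrac{a_1}{2}, \tfrac{a_1}{2} + \tfrac12]$ bijectively onto $[0,1]$.

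The main step is an induction on $k$. I would prove the stronger statement that describes $g_{\strih{a^*}{k}}$ completely on the relevant subinterval, namely: $g_{\strih{a^*}{k}}(x) = 0$ for $x \le \dstrih{a^*}{k} = \sum_{i=1}^k a_i 2^{-i}$, that $g_{\strih{a^*}{k}}(x) = 1$ for $x \ge \dstrih{a^*}{k} + 2^{-k}$, and that on the middle interval $[\dstrih{a^*}{k}, \dstrih{a^*}{k} + 2^{-k}]$ the term acts as the affine map $x \mapsto 2^k\bigl(x - \dstrih{a^*}{k}\bigr)$ onto $[0,1]$. For the base case $k=1$: if $a_1 = 0$ then $g_{(0)} = f_0$ satisfies $f_0(x) = 0$ only at $x=0 = \dstrih{a^*}{1}$, $f_0(x) = 1$ for $x \ge 1/2 = \dstrih{a^*}{1} + 2^{-1}$, and $f_0(x) = 2x$ in between; symmetrically $a_1 = 1$ gives $f_1(x) = 0$ on $[0,1/2]$, $f_1(1) = 1$, and $f_1(x) = 2x - 1 = 2(x - \tfrac12)$ on the middle. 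The inductive step composes $f_{a_{k+1}}$ with $g_{\strih{a^*}{k}}$: outside the middle interval $g_{\strih{a^*}{k}}$ is already $0$ or $1$, and $f_{a_{k+1}}(0) = 0$, $f_{a_{k+1}}(1) = 1$, so the clamped regions only grow in the right way; inside the middle interval $g_{\strih{a^*}{k}}$ is the affine bijection onto $[0,1]$, so composing with $f_{a_{k+1}}$ reproduces exactly the base-case picture but rescaled, and a short computation identifies the new breakpoints as $\dstrih{a^*}{k+1}$ and $\dstrih{a^*}{k+1} + 2^{-(k+1)}$ using $\dstrih{a^*}{k+1} = \dstrih{a^*}{k} + a_{k+1} 2^{-(k+1)}$. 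The stated lemma then follows by discarding the affine-middle information and keeping only the two clamped regimes, with the strict inequalities coming from the fact that the affine map takes values in the open interval $(0,1)$ strictly between the endpoints.

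The routine-but-slightly-delicate point to get right is the bookkeeping at the boundary between the three regimes — whether the breakpoints are attained with equality and on which side — since $f_0$ and $f_1$ differ in exactly where they are clamped versus affine. Because the lemma only asserts the values for $x$ \emph{strictly} above $\dstrih{a^*}{k} + 2^{-k}$ and strictly below $\dstrih{a^*}{k}$, one does not actually need to resolve the endpoint behavior for the lemma itself; it suffices to carry the open-interval affine description through the induction, which sidesteps the case analysis. The only genuine obstacle is verifying that composition behaves well, i.e.\ that applying one more doubling map to a function which is ``$0$, then affine onto $[0,1]$, then $1$'' yields another function of the same shape with correctly updated breakpoints; this is a direct computation with $\min$ and $\max$ and presents no conceptual difficulty.
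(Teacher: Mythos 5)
Your proof is correct. The paper itself gives no argument for this lemma (it is quoted from Teheux's thesis), and your strengthened induction — showing $g_{\strih{a^*}{k}}$ is $0$ up to $\dstrih{a^*}{k}$, the affine map $x\mapsto 2^{k}\bigl(x-\dstrih{a^*}{k}\bigr)$ on the middle interval of length $2^{-k}$, and $1$ beyond — is exactly the standard argument behind the cited result (the authors even had this affine description, the map $l(a,k)$, in a removed remark of the source), so nothing further is needed.
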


Note that for any finite sequence $(a_1,...,a_k)$ of elements 
of $\{0, 1\}$ such that 
$a_k=0$ we have that $g_{(a_1,...,a_k)}= g_{(a_1,...,a_{k-1})}%
\oplus g_{(a_1,...,a_{k-1})}$ and clearly any dyadic number $a$ corresponds 
to such a sequence $(a_1,...,a_k)$. 

As an immediate consequence, we get

\begin{cor}{\rm \cite[Corollary 1.15 (1)]{teheux}} \label{odhad} Let us have the standard MV-algebra 
$[0,1]$, $x\in [0, 1]$ and $r\in (0,1)\cap {\mathbb D}$. Then 
 there is a term $t_r$ in $T_{\mathbb D}$ such that
$$
t_r(x)=1 \quad \text{if and only if}\quad r\leq x.
$$
\end{cor}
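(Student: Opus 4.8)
The plan is to derive Corollary~\ref{odhad} directly from the preceding Lemma (\cite[Lemma 1.14]{teheux}) together with the observation recorded just before the corollary, namely that a dyadic number $r$ admits a finite dyadic decomposition whose last entry is $0$, so that $g_r = \gstrih{r^*}{k}$ is an honest term of $T_{\mathbb D}$. First I would write $r\in(0,1)\cap{\mathbb D}$ in its finite dyadic decomposition $r=\sum_{i=1}^{k} r_i 2^{-i}$, padding with a trailing $0$ if necessary so that $r_k=0$; then $\sum_{i=1}^{k} r_i 2^{-i}=r$ while $\sum_{i=1}^{k} r_i 2^{-i}+2^{-k}=r+2^{-k}$. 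Define $t_r:=g_r=g_{(r_1,\dots,r_k)}=f_{r_k}\circ\cdots\circ f_{r_1}$, which lies in $T_{\mathbb D}$ by construction.

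Next I would evaluate $t_r$ on the standard MV-algebra $[0,1]$. Applying the Lemma with $a^*=r^*$ gives $t_r(x)=g_r(x)=1$ whenever $x>r+2^{-k}$ and $t_r(x)=0$ whenever $x<r$; it remains to pin down the behaviour on the interval $[r,r+2^{-k}]$. Here I would use that each $f_0,f_1$ is monotone (indeed $x\oplus x$ and $x\odot x$ are monotone operations on $[0,1]$), hence $t_r$ is monotone, so $t_r^{-1}(\{1\})$ is an up-set. Combined with $t_r(x)=0$ for $x<r$, monotonicity forces $t_r^{-1}(\{1\})\subseteq[r,1]$. For the reverse inclusion I would check $t_r(r)=1$: a direct induction on $k$ shows that $g_{(r_1,\dots,r_j)}\bigl(\sum_{i=1}^{k} r_i 2^{-i}\bigr)$ equals $\sum_{i=j+1}^{k} r_i 2^{i-j}$ hmm — more cleanly, one verifies by induction that $g_{\strih{r^*}{j}}(r)=\dstrih{(r_{j+1},r_{j+2},\dots)}{k-j}\cdot 2^{j}$, and at $j=k$ this value is the empty sum normalised appropriately; since $r_k=0$ the last step is a doubling $x\mapsto x\oplus x$ whose argument is $\tfrac12$, giving $t_r(r)=1$. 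Then monotonicity yields $t_r(x)=1$ for all $x\ge r$, i.e.\ $t_r(x)=1\iff r\le x$.

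The only genuinely delicate point is the evaluation on the boundary interval, which the Lemma deliberately leaves open; everything else is bookkeeping about dyadic decompositions. I would handle it by the monotonicity argument above, reducing the question to the single equality $t_r(r)=1$, and that equality I would prove by a short induction on the length of the dyadic expansion, using the explicit recursion $g_{(r_1,\dots,r_j)}=f_{r_j}\circ g_{(r_1,\dots,r_{j-1})}$ and the formulas $f_0(y)=\min(2y,1)$, $f_1(y)=\max(2y-1,0)$ for the standard MV-algebra. I expect this inductive boundary computation to be the main (and essentially the only) obstacle; once it is in place the corollary follows immediately.
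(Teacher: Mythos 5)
There is a genuine error in the key step, namely the boundary evaluation $t_r(r)=1$. For the standard algebra one has $f_0(y)=\min(2y,1)$, $f_1(y)=\max(2y-1,0)$, and an easy induction (exactly the induction you propose) gives the closed formula
$g_{(a_1,\dots,a_k)}(x)=\min\bigl(\max\bigl(2^k\bigl(x-\textstyle\sum_{i=1}^k a_i2^{-i}\bigr),0\bigr),1\bigr)$.
Hence for your choice $t_r:=g_{(r_1,\dots,r_k)}$ with $\sum_{i=1}^k r_i2^{-i}=r$ (trailing zero padded), the set where the term takes value $1$ is $[r+2^{-k},1]$, not $[r,1]$; in particular $t_r(r)=0$. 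Concretely, for $r=\tfrac12$ the padded sequence is $(1,0)$, so $t_r=f_0\circ f_1$ and $t_r(\tfrac12)=f_0(f_1(\tfrac12))=f_0(0)=0$. The slip is in tracking which $f_{r_j}$ meets the running value $\tfrac12$: at the stage where the intermediate value equals $\tfrac12$ the next map applied is $f_{1}$ (the last significant digit of $r$ is $1$), which sends $\tfrac12$ to $0$, and only afterwards comes the padded doubling $f_0$, which keeps $0$. Your auxiliary inductive formula also carries a spurious factor $2^{j}$: correctly $g_{(r_1,\dots,r_j)}(r)=\sum_{i=j+1}^{k}r_i2^{\,j-i}$, which at $j=k-1$ equals $r_k2^{-1}=0$, confirming $t_r(r)=0$.

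The repair is to shift the sequence by one dyadic step: if $r=\sum_{i=1}^{m}r_i2^{-i}$ with $r_m=1$, take $t_r:=g_{(r_1,\dots,r_{m-1},0)}$, i.e.\ the term attached to $r-2^{-m}$ (the digits of $r$ with the terminal $1$ replaced by $0$). By the closed formula above, $t_r(x)=\min(\max(2^m(x-r+2^{-m}),0),1)$, which equals $1$ exactly when $x\geq r$; note this term still ends in $f_0$, as required later in Proposition \ref{T4}. For comparison: the paper does not prove the corollary at all — it is quoted from Teheux (Corollary 1.15 (1)) and declared an immediate consequence of Lemma 1.14 together with the remark on trailing zeros — so your plan of supplying the explicit induction is reasonable, but as written it verifies the wrong term and the stated equivalence fails for it.
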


\subsection{Filters, ultrafilters and the term $t_r$}
\label{improve}

The aim of this part is to show that any filter $F$ in an 
MV-algebra  ${\mathbf A}$ which does not contain the element $t_r(x)$  for some dyadic number 
$r\in (0,1)\cap {\mathbb D}$ and an element $x\in A$ can be extended to an ultrafilter $U$ containing 
$F$ such that $t_r(x)\not\in U$.

\bigskip

A \emph{filter} of an MV-algebra ${\mathbf A}$ is a subset $F \subseteq A$
satisfying: 

(F1) $1 \in F$

(F2) $x \in F,\; y \in A,\; x \leq y\; \rimply \; y \in F$

(F3)  $x, y \in F \;  \rimply \; x\odot y \in F$.

\bigskip

A  filter is said to be  \emph{proper} if $0 \notin F$. Note that there 
is a one-to-one correspondence between filters and congruences 
on MV-algebras. A filter $Q$ is \emph{prime} if it satisfies the following
conditions:

(P1) $0 \notin Q$.

(P2) For each $x, \, y $ in $A$ such that $x\vee y\in Q$, 
     either $x \in Q$ or $y\in Q$.

\bigskip
In this case the corresponding factor MV-algebra ${\mathbf A}/Q$  is 
linear. 

\bigskip

A filter $U$ is {\em maximal} (and in this case it will 
be also called an ultrafilter) if $0\notin U$ and 
for any other filter $F$ of ${\mathbf A}$ such that $U\subseteq F$, then either $F=A$ or $F=U$. There is a one-to-one correspondence between 
ultrafilters and MV-morphisms from ${\mathbf A}$ into $[0,1]$ (extremal states). \zmena{ For any 
ultrafilter $A\in T$ we identify the class $x/A$ with its image in the standard algebra 
and thus with its image in interval $[0,1]$ of real numbers.}

In what follows we work mostly with MV-morphisms into $[0,1]$ instead of ultrafilters. 

\begin{lemma}\label{oT4} Let $\mathbf{A}$ be a linearly ordered MV-algebra, 
$s:\mathbf A\to [0,1]$ an MV-morphism, $x\in A$ such that
$s(x)=1$. Then  $x\oplus x=1$.
\end{lemma}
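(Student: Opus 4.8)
The plan is to reduce the statement to a single order comparison and then eliminate the wrong case using monotonicity. First I would recall the elementary translation that $x\oplus x=1$ holds if and only if $\neg x\le x$: by (MV4) we have $\neg\neg x=x$, so $\neg x\le x$ unfolds (by the definition of $\le$) to $\neg\neg x\oplus x=x\oplus x=1$. Hence it suffices to prove $\neg x\le x$.

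Next I would invoke linearity. Since $\mathbf A$ is an MV-chain, the two elements $x$ and $\neg x$ are comparable, so either $\neg x\le x$ — which is exactly what we want — or $x\le\neg x$. In the second case I apply the morphism $s$. MV-morphisms preserve $\oplus$, $\neg$, $0$ and $1$, hence they are order-preserving (because $u\le v$ is the equation $\neg u\oplus v=1$); therefore $x\le\neg x$ gives $1=s(x)\le s(\neg x)=\neg s(x)=\neg 1=0$ in $[0,1]$, a contradiction. So only the first alternative survives, and we conclude $\neg x\le x$, i.e. $x\oplus x=1$.

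There is no genuine obstacle here; the content is entirely elementary, and the only point requiring care is to notice precisely where each hypothesis enters: linearity supplies the comparability of $x$ and $\neg x$, while the existence of the morphism $s$ with $s(x)=1$ rules out the unwanted alternative $x\le\neg x$. One can also present the argument contrapositively: if $x\oplus x<1$ then $\neg x\not\le x$, so by linearity $x<\neg x$, whence $s(x)\le s(\neg x)=\neg s(x)<1$, contradicting $s(x)=1$.
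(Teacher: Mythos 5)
Your proof is correct and follows essentially the same route as the paper: compare $x$ with $\neg x$ using linearity, and rule out the case $x\le\neg x$ by applying the morphism $s$ with $s(x)=1$. The paper derives its contradiction via $1=s(x)\odot s(x)=s(x\odot x)\leq s(x\odot\neg x)=s(0)=0$ and then concludes $x\oplus x\geq x\oplus\neg x=1$, whereas you use monotonicity of $s$ together with $s(\neg x)=\neg s(x)$ and the equivalence $\neg x\le x\iff x\oplus x=1$; these are inessential variations of the same argument.
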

\begin{proof} Assume that $x \leq \neg x$. Then 
$1=s(x)\odot s(x)=s(x\odot x)\leq s(x\odot \neg x)=s(0)=0$ which is absurd. 
Therefore $\neg x < x$ and we have that 
$ x\oplus x\geq x\oplus \neg x=1$.
\end{proof}

\newcommand{\indentitem}{\setlength\itemindent{-25pt}} 
\begin{proposition}\label{T4} Let $\mathbf{A}$ be a linearly ordered MV-algebra, 
$s:\mathbf A\to [0,1]$ an MV-morphism, $x\in A$.  Then 
$s(x)=1$ iff $t_r(x)=1$ for all $r\in (0,1)\cap {\mathbb D}$.

Equivalently, $s(x)<1$ iff there is a dyadic number $r\in (0,1)\cap {\mathbb D}$ 
such that $t_r(x)\not=1$. In this case, $s(x)<r$. 
\end{proposition}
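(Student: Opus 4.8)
The plan is to prove the two implications of the displayed equivalence separately, using that an MV-morphism commutes with every term operation, together with Lemma~\ref{oT4} and Corollary~\ref{odhad}; the ``equivalently'' restatement and the extra clause then come cheaply. Throughout I exploit the concrete shape of $t_r$: by the remark recorded just before Corollary~\ref{odhad}, every dyadic $r\in(0,1)$ arises from a finite $\{0,1\}$-sequence $(a_1,\dots,a_k)$ with $a_k=0$, so we may take $t_r$ to be $g_{(a_1,\dots,a_k)}=g_{(a_1,\dots,a_{k-1})}\oplus g_{(a_1,\dots,a_{k-1})}$. Writing $p_r:=g_{(a_1,\dots,a_{k-1})}\in T_{\mathbb D}$, this reads $t_r(x)=p_r(x)\oplus p_r(x)$, and since $1\oplus 1=1=1\odot 1$ every term of $T_{\mathbb D}$ fixes $1$.

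Assume first $s(x)=1$ and fix a dyadic $r\in(0,1)$. Because $s$ is an MV-morphism, $s(p_r(x))=p_r(s(x))=p_r(1)=1$, and applying Lemma~\ref{oT4} to the element $p_r(x)$ (this is where linearity of $\mathbf A$ enters) gives $p_r(x)\oplus p_r(x)=1$, that is, $t_r(x)=1$. Conversely, suppose $t_r(x)=1$ for every dyadic $r\in(0,1)$. Then in $[0,1]$ we have $t_r(s(x))=s(t_r(x))=s(1)=1$, so Corollary~\ref{odhad} yields $r\le s(x)$ for every such $r$; since $\sup((0,1)\cap{\mathbb D})=1$ this forces $s(x)=1$. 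This establishes the first equivalence.

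The ``equivalently'' statement is merely its contrapositive, $s(x)\neq 1$ being the same as $s(x)<1$ because $s(x)\in[0,1]$. For the final clause, assume $s(x)<1$ and choose a dyadic $r$ with $s(x)<r<1$; then $r\not\le s(x)$, so $t_r(s(x))\neq 1$ by Corollary~\ref{odhad}, hence $s(t_r(x))=t_r(s(x))\neq 1=s(1)$ and therefore $t_r(x)\neq 1$ — so the witness $r$ can indeed be chosen with $s(x)<r$. The one genuinely delicate point is the direction $s(x)=1\Rightarrow t_r(x)=1$: one cannot pass from $s(t_r(x))=1$ back to $t_r(x)=1$ directly, since an MV-morphism need not be injective, and it is precisely the factorization $t_r=p_r\oplus p_r$ feeding into Lemma~\ref{oT4} that converts the soft identity $s(p_r(x))=1$ into the genuine identity $p_r(x)\oplus p_r(x)=1$.
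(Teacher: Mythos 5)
Your proposal is correct and takes essentially the same route as the paper's proof: the direction $s(x)=1\Rightarrow t_r(x)=1$ via the factorization $t_r=t\oplus t$ with $t\in T_{\mathbb D}$ fed into Lemma~\ref{oT4}, and the converse via $s(t_r(x))=t_r(s(x))$ together with Corollary~\ref{odhad}. You merely make explicit the final clause (choosing the witness $r$ with $s(x)<r$) and the contrapositive restatement, which the paper leaves implicit in its chain of equivalences.
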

\begin{proof} In what follows we may assume that $x\not=0$ since 
$s(0)=0$ and $t_r(0)=0$ for all $r\in (0,1)\cap {\mathbb D}$. Note first that $s(t_r(x))=t_r(s(x))$ since $s$ is 
an MV-morphism. Then $s(x)=1$ iff $r\leq s(x)$ for all $r\in (0,1)\cap {\mathbb D}$ iff $t_r(s(x))=1$  for all $r\in (0,1)\cap {\mathbb D}$  iff 
$s(t_r(x))=1$ for all $r\in (0,1)\cap {\mathbb D}$. 

Assume now that $t_r(x)=1$ for all $r\in (0,1)\cap {\mathbb D}$. Then 
evidently $s(t_r(x))=1$ for all $r\in (0,1)\cap {\mathbb D}$ and by the above considerations we have that $s(x)=1$. 

Conversely, let $s(x)=1$ and $r\in (0,1)\cap {\mathbb D}$. Then 
$t_r(x)= t(x)\oplus t(x)$ such that $t(x)$ 
is some term from the clone $T_{\mathbb D}$ constructed entirely from the operations $(-)\oplus  (-)$ 
and $(-)\odot (-)$. Therefore $s(t(x))=t(s(x))=t(1)=1$. By Lemma \ref{oT4} we get that 
$t_r(x)=t(x)\oplus t(x)=1$.

\end{proof}

\begin{proposition}\label{cT4} Let $\mathbf{A}$ be an MV-algebra, 
$x\in A$ and $F$ be any filter of $\mathbf{A}$. Then there 
is  an MV-morphism $s:\mathbf A\to [0,1]$ such that 
$s(F)\subseteq \{1\}$ and $s(x)<1$ if and only if 
there is a dyadic number $r\in (0,1)\cap {\mathbb D}$ such that $t_r(x)\notin F$.
\end{proposition}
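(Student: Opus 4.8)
The plan is to prove both implications of Proposition~\ref{cT4} by reducing to the linearly ordered case already handled in Proposition~\ref{T4}, using the correspondence between MV-morphisms into $[0,1]$ and ultrafilters together with the extension result announced at the start of Section~\ref{improve}.

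First I would dispose of the easy direction. Suppose $s:\mathbf A\to[0,1]$ is an MV-morphism with $s(F)\subseteq\{1\}$ and $s(x)<1$. The image $s(\mathbf A)\subseteq[0,1]$ is a linearly ordered MV-subalgebra of the standard algebra, so applying Proposition~\ref{T4} to $s$ viewed as a morphism onto this chain (or directly: $s(x)<1$ forces, by Proposition~\ref{T4}, some dyadic $r\in(0,1)\cap\mathbb D$ with $t_r(s(x))\neq 1$, i.e.\ $t_r(s(x))<1$). Since $s$ is an MV-morphism, $s(t_r(x))=t_r(s(x))<1$, hence $t_r(x)\neq 1$; and because $s(F)\subseteq\{1\}$ while $s(t_r(x))<1$, we cannot have $t_r(x)\in F$ (filters are closed upward and $1\in$ every filter, but more to the point $s$ would map $t_r(x)$ to $1$ if it lay in $F$). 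Thus $t_r(x)\notin F$, as required.

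For the converse, assume there is a dyadic $r\in(0,1)\cap\mathbb D$ with $t_r(x)\notin F$. The key step is to invoke the extension property advertised in the opening paragraph of Section~\ref{improve}: the filter $F$ can be enlarged to an ultrafilter $U\supseteq F$ with $t_r(x)\notin U$. By the one-to-one correspondence between ultrafilters of $\mathbf A$ and MV-morphisms $\mathbf A\to[0,1]$, $U$ yields an MV-morphism $s:\mathbf A\to[0,1]$ with $\ker$-filter $U$, so $s(y)=1\iff y\in U$; in particular $s(F)\subseteq s(U)\subseteq\{1\}$ and $s(t_r(x))<1$. Now $s$ factors through the linear algebra $\mathbf A/U$, and applying the ``equivalently'' half of Proposition~\ref{T4} to this morphism: from $t_r(x)\notin U$, i.e.\ $s(t_r(x))=t_r(s(x))\neq 1$, we get $s(x)<1$ (indeed Proposition~\ref{T4} gives $s(x)<r$). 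This produces the desired morphism.

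The main obstacle — and the only genuinely substantive point — is the extension lemma used in the converse: that $t_r(x)\notin F$ implies the existence of an ultrafilter $U\supseteq F$ still avoiding $t_r(x)$. This is exactly what Section~\ref{improve} is set up to establish, and I expect it to follow from a Zorn's-lemma argument on the poset of proper filters containing $F$ and avoiding $t_r(x)$, once one checks that such a maximal filter is genuinely an ultrafilter — here one needs that $t_r(x)=t(x)\oplus t(x)$ behaves well, namely if a filter $F'$ maximal among those missing $t_r(x)$ failed to be prime, one could enlarge it using (P2) while still missing $t_r(x)$, a contradiction; the Boolean/idempotent shape of $t_r$ (its top slice being of the form $u\oplus u$, cf.\ Lemma~\ref{oT4}) is what makes the maximal such filter maximal as a filter. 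Modulo that lemma, everything else is the routine transfer of Proposition~\ref{T4} along $s$ via the ultrafilter correspondence.
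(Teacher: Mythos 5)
Your forward direction is fine and is essentially the paper's (pick a dyadic $r$ with $s(x)<r<1$ and use Corollary~\ref{odhad}). The converse, however, rests on a claim that is false: that $t_r(x)\notin F$ allows one to extend $F$ to an \emph{ultrafilter} $U$ with $t_r(x)\notin U$. A filter maximal with respect to containing $F$ and avoiding $t_r(x)$ is prime, but it need not be a maximal filter, and the ``idempotent shape'' of $t_r$ does not repair this. Concretely, let $\mathbf{A}=\Gamma(\mathbb{Q}\times_{\mathrm{lex}}\mathbb{Q},(1,0))$ be the chain of all pairs $(p,q)$ with $(0,0)\le (p,q)\le (1,0)$ lexicographically (a linearly ordered, non-semisimple MV-algebra), take $F=\{1\}$, $x=(1/2,-1/2)$, $r=1/2$, and note that one may take $t_{1/2}(y)=y\oplus y$. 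Then $t_r(x)=(1,-1)\neq 1$, so $t_r(x)\notin F$; but the only proper filters of $\mathbf{A}$ are $\{1\}$ and the radical $M=\{(1,q)\mid q\le 0\}$, so the unique ultrafilter $M$ \emph{contains} $t_r(x)$, and the filter maximal with respect to avoiding $t_r(x)$ is $\{1\}$, which is prime but not maximal. Hence no ultrafilter avoiding $t_r(x)$ exists, your Zorn-plus-primeness sketch cannot produce one, and the step ``ultrafilter correspondence, then $s(t_r(x))<1$'' breaks down at its very first move. (The announcement at the head of Section~\ref{improve}, and Corollary~\ref{oddels}, should be read accordingly: the corollary is deduced \emph{from} this proposition, so it is not available here anyway, and in the example above the unique morphism has $s(x)=1/2=r$ and $s(t_r(x))=1$, so one can only hope for $s(x)<1$, not for a morphism killing $t_r(x)$.)

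The paper's proof circumvents exactly this difficulty, and the difference is the whole point. One extends $F$ only to a filter $K$ maximal with respect to $F\subseteq K$ and $t_r(x)\notin K$; such a $K$ is prime, so $\mathbf{A}/K$ is linearly ordered, and by construction $t_r(x)/K\neq 1$ \emph{in the chain} $\mathbf{A}/K$. Now Proposition~\ref{T4} is applied inside this chain, to its (unique) MV-morphism $s_K:\mathbf{A}/K\to[0,1]$: from $t_r(x/K)\neq 1$ one concludes $s_K(x/K)<1$. Setting $s=s_K\circ g$, where $g:\mathbf{A}\to\mathbf{A}/K$ is the quotient map, gives $s(F)\subseteq s(K)\subseteq\{1\}$ and $s(x)<1$, which is all the statement requires. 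The crucial distinction from your plan is which hypothesis feeds Proposition~\ref{T4}: the paper uses ``$t_r(x)$ is not the unit of the linearly ordered quotient $\mathbf{A}/K$,'' which is guaranteed by the choice of $K$, whereas you need ``$s(t_r(x))<1$ in $[0,1]$,'' which, as the example shows, may be impossible to achieve for any morphism at all.
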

\begin{proof} 
Assume first that  there is an MV-morphism $s:\mathbf A\to [0,1]$ such that 
$s(F)\subseteq \{1\}$ and $s(x)<1$. Then there is a dyadic number 
$r\in (0,1)\cap {\mathbb D}$ such that $s(x)<r<1$. By Corollary \ref{odhad} 
we get that $s(t_r(x))=t_r(s(x))\not =1$. Hence 
$t_r(x) \notin F$. 

Now, let there be a dyadic number $r\in (0,1)\cap {\mathbb D}$ such that 
$t_r(x) \notin F$.  Then there is a filter $K$ of $\mathbf{A}$, 
$F\subseteq K$, $t_r(x)\notin K$ such that $K$ is maximal with this property. 
Evidently, $K$ is a prime filter of $\mathbf{A}$. Hence the factor algebra 
$\mathbf{A}/K$ is linearly ordered and we have a surjective MV-morphism 
$g:\mathbf{A}\to \mathbf{A}/K$, $g(K)\subseteq \{1\}$. 
Let us denote by $U_K$ the maximal 
filter of $\mathbf{A}/K$ and by $s_K:\mathbf{A}/K \to [0,1]$ the 
corresponding MV-morphism. Because $t_r(x) \notin K$ we get that 
$t_r(g(x))=g(t_r(x))\not =1$.

It follows from Proposition \ref{T4} that $s_K(t_r(g(x)))< r <1$. 
This yields that $s_K(g(t_r(x)))< r <1$. Let us put $s=s_K\circ g$. Then 
$s:\mathbf A\to [0,1]$ is an MV-morphism, $s(t_r(x))< r <1$. 
Evidently $s(x)< r < 1$ otherwise we would have also $1=s(t_r(x))<1$, 
a contradiction. Clearly, 
$s(F)\subseteq s(K)=s_K(g(K))\subseteq s_K(\{1\})=\{1\}$. 
\end{proof}

\begin{cor}\label{oddels}
Let $\mathbf{A}$ be an MV-algebra, 
$x\in A$ and $F$ be any filter of  $\mathbf A$ such that $t_r(x)\notin F$
for some dyadic number $r\in (0,1)\cap {\mathbb D}$. Then there 
is an ultrafilter $U$ of $\mathbf A$ such that 
$F\subseteq U$ and $x/U< r<1$.
\end{cor}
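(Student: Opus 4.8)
The plan is to deduce Corollary~\ref{oddels} from Proposition~\ref{cT4} together with the correspondence between ultrafilters of $\mathbf A$ and MV-morphisms $\mathbf A\to[0,1]$ recalled just before Lemma~\ref{oT4}. In other words, the corollary is essentially a restatement of (the $\Leftarrow$ direction of) Proposition~\ref{cT4} in the language of ultrafilters; the only thing to watch is that the ultrafilter must see the concrete bound $r$, not merely the bound $1$.

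First I would apply Proposition~\ref{cT4} to $x$ and $F$: since $t_r(x)\notin F$ for the given dyadic $r\in(0,1)\cap{\mathbb D}$, there is an MV-morphism $s:\mathbf A\to[0,1]$ with $s(F)\subseteq\{1\}$ and $s(x)<1$. To upgrade this to $s(x)<r$, I would recall how $s$ is produced in the proof of Proposition~\ref{cT4}: one takes a prime filter $K\supseteq F$ maximal with $t_r(x)\notin K$, passes to the chain $\mathbf A/K$ with quotient map $g$ and to the MV-morphism $s_K:\mathbf A/K\to[0,1]$ attached to the maximal filter of $\mathbf A/K$, and sets $s:=s_K\circ g$. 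Since $t_r(g(x))=g(t_r(x))\neq1$, the last clause of Proposition~\ref{T4} applied to $\mathbf A/K$, $s_K$ and $g(x)$ gives $s_K(g(x))<r<1$, i.e.\ $s(x)<r<1$. (Equivalently, this sharper estimate can simply be quoted from the displayed inequalities in the proof of Proposition~\ref{cT4}.)

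Next I would set $U:=s^{-1}(\{1\})=\{a\in A: s(a)=1\}$. This is a filter of $\mathbf A$ (it contains $1$, is upward closed, and is closed under $\odot$ because $s$ preserves $\odot$), and it is exactly the ultrafilter corresponding to the MV-morphism $s$ under the one-to-one correspondence recalled in the text; alternatively, $\mathbf A/U$ embeds into $[0,1]$, hence is simple, so $U$ is maximal. From $s(F)\subseteq\{1\}$ we get $F\subseteq U$. Finally, under the identification of the class $x/U$ with its image in $[0,1]$ fixed in the text after the ultrafilter correspondence, $x/U$ is precisely $s(x)$, so $x/U=s(x)<r<1$, which is the desired conclusion.

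The one genuinely delicate point is the sharpening from $s(x)<1$ to $s(x)<r$: a literal application of the statement of Proposition~\ref{cT4} only yields an ultrafilter with $x/U<1$, so one must either invoke the construction inside that proof or re-run the one-line argument through Proposition~\ref{T4} to pin the value below $r$. Once that is in hand, the remaining steps — that $s^{-1}(\{1\})$ is an ultrafilter, that it contains $F$, and that the class of $x$ in it equals $s(x)$ — are routine and follow immediately from the correspondence between ultrafilters and $[0,1]$-valued MV-morphisms.
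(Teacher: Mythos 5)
Your proof is correct and follows essentially the same route the paper intends: Corollary \ref{oddels} is simply extracted from the proof of Proposition \ref{cT4}, where the morphism $s=s_K\circ g$ is shown to satisfy $s(F)\subseteq\{1\}$ and $s(x)<r<1$, and passing to $U=s^{-1}(\{1\})$ is the standard correspondence between ultrafilters and MV-morphisms into $[0,1]$, under which $x/U=s(x)$. You also correctly flag the only delicate point --- that the bare statement of Proposition \ref{cT4} yields only $s(x)<1$, so the bound below $r$ must be taken from the displayed estimate inside its proof (or re-derived via the last clause of Proposition \ref{T4}) --- which is exactly how the paper obtains it.
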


\section{Semistates on MV-algebras} \label{semistate}

In this section we characterize arbitrary meets of MV-morphism into 
a unit interval as so-called semi-states.

\begin{definition}\label{semist} Let $\mathbf{A}$ be an MV-algebra. A map 
$s:\mathbf{A} \to [0,1]$ is called
\begin{enumerate}
\item {\em a semi-state on}  $\mathbf{A}$ if 
\begin{itemize}
\item[(i)] $s(1)=1,$
\item[(ii)]$x\leq y$ implies $s(x)\leq s(y),$
\item[(iii)] $s(x)=1$ and $s(y)=1$ implies $s (x\odot y)=1,$ 
 \item[(iv)] $s(x)\odot s(x)=s(x\odot x),$ 
\item[(v)] $s(x)\oplus s(x)=s(x\oplus x).$
\end{itemize}
\item{\em a  strong semi-state on}  $\mathbf{A}$ if it is a semistate such that 
\begin{itemize}
\item[(vi)] $s(x)\odot s(y)\leq s (x\odot y),$
\item[(vii)] $s(x)\oplus s(y)\leq s (x\oplus y),$
\item[(viii)] $s(x\wedge y)=s(x)\wedge s(y),$
 \item[(ix)] $s(x^{n})=s(x)^{n}$ for all $n\in{\mathbb N}$,
\item[(x)] $n\times s(x)= s(n\times x)$ for all $n\in{\mathbb N}$,    
\end{itemize}

\end{enumerate}

\end{definition}

Note that any MV-morphism into a unit interval is a  strong semi-state.

\begin{lemma}\label{meetstate} Let $\mathbf{A}$ be an MV-algebra, 
$S$ a non-empty set of semi-states (strong semi-states) on $\mathbf{A}$. Then the point-wise meet 
$t=\bigwedge S:\mathbf{A} \to [0,1]$ is a semi-state 
(strong semi-state) on  $\mathbf{A}$.
\end{lemma}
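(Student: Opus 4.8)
The plan is to verify each of the defining properties of a (strong) semi-state for the point-wise meet $t = \bigwedge S$ by reducing them to the corresponding properties of the individual members $s \in S$, exploiting the fact that all the relevant operations on $[0,1]$ — namely $\oplus$, $\odot$, $\wedge$, the unary maps $x \mapsto x \oplus x$, $x \mapsto x \odot x$, and iterates $x \mapsto x^n$, $x \mapsto n\times x$ — are order-preserving and, crucially, \emph{continuous} on the compact interval $[0,1]$, so that they commute with infima of arbitrary nonempty families. First I would record the easy items: $t(1) = \bigwedge_{s\in S} s(1) = \bigwedge_{s\in S} 1 = 1$ gives (i); monotonicity (ii) is immediate since each $s$ is monotone and a pointwise meet of monotone maps is monotone; and (iii) follows because if $t(x) = t(y) = 1$ then $s(x) = s(y) = 1$ for every $s \in S$, hence $s(x \odot y) = 1$ for every $s$, hence $t(x\odot y) = 1$.

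The substantive semi-state axioms are (iv) $t(x\odot x) = t(x)\odot t(x)$ and (v) $t(x\oplus x) = t(x)\oplus t(x)$. For (v), I would argue
\[
t(x \oplus x) \;=\; \bigwedge_{s\in S} s(x\oplus x) \;=\; \bigwedge_{s\in S}\bigl(s(x)\oplus s(x)\bigr) \;=\; \Bigl(\bigwedge_{s\in S} s(x)\Bigr)\oplus\Bigl(\bigwedge_{s\in S} s(x)\Bigr) \;=\; t(x)\oplus t(x),
\]
where the second equality is property (v) for each $s$ and the third equality is the key analytic fact that $a \mapsto a \oplus a = \min(2a,1)$ is a continuous, monotone, hence inf-preserving self-map of $[0,1]$. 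The computation for (iv) is identical with $a \mapsto a \odot a = \max(2a-1,0)$ in place of $a\mapsto a\oplus a$. I expect this inf-commutation step to be the only real content of the lemma; it can be justified either by invoking (D1)/(D2) in the complete MV-algebra $[0,1]$ applied to the constant-on-value computation, or directly: for a monotone continuous $\varphi$ and nonempty $\{a_s\}$, $\varphi(\bigwedge a_s) \le \varphi(a_s)$ for all $s$ gives one inequality, and approximating $\bigwedge a_s$ from above by some $a_{s_0}$ within $\varepsilon$ together with uniform continuity gives the reverse.

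For the strong case I would additionally check (vi)--(x). Items (vi) and (vii) are inequalities: for (vii),
\[
t(x)\oplus t(y) \;=\; \Bigl(\bigwedge_{s\in S} s(x)\Bigr)\oplus\Bigl(\bigwedge_{s\in S} s(y)\Bigr) \;\le\; \bigwedge_{s\in S}\bigl(s(x)\oplus s(y)\bigr) \;\le\; \bigwedge_{s\in S} s(x\oplus y) \;=\; t(x\oplus y),
\]
using distributivity (D1) of $\oplus$ over $\bigwedge$ in $[0,1]$ for the first inequality and (vii) for each $s$ for the second; (vi) is dual via (D2). Property (viii) $t(x\wedge y) = t(x)\wedge t(y)$ follows from $t(x\wedge y) = \bigwedge_s s(x\wedge y) = \bigwedge_s\bigl(s(x)\wedge s(y)\bigr) = \bigl(\bigwedge_s s(x)\bigr)\wedge\bigl(\bigwedge_s s(y)\bigr)$, the last step being the standard fact that binary meet distributes over arbitrary meet in the complete lattice $[0,1]$. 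Finally (ix) and (x) are handled exactly like (iv) and (v): $a \mapsto a^n$ and $a \mapsto n\times a$ are continuous monotone self-maps of $[0,1]$, so they commute with $\bigwedge S$, and combined with $s(x^n) = s(x)^n$, $n\times s(x) = s(n\times x)$ for each $s$ this yields the claim. No step presents a genuine obstacle; the proof is a routine verification whose one recurring idea is that all operations involved are sup/inf-preserving on $[0,1]$.
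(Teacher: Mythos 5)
Your proof is correct, and the overall skeleton (checking (i)--(x) one by one, with (i)--(iii) and (viii) being routine) is the same as the paper's; where you genuinely diverge is in how you justify the crucial commutation of the unary maps $a\mapsto a\oplus a$, $a\mapsto a\odot a$, $a\mapsto n\times a$, $a\mapsto a^{n}$ with arbitrary infima. You argue analytically: these are monotone continuous self-maps of $[0,1]$, and a monotone continuous map preserves infima because the infimum can be approximated from above within the family, so $\varphi(\bigwedge_s a_s)=\bigwedge_s\varphi(a_s)$; this is a clean and correct argument, but it leans on the order topology of the real unit interval. The paper instead stays inside MV-algebra: it uses the distributivity laws (D1)/(D2) of the complete MV-algebra $[0,1]$ to rewrite, e.g., $t(x)\odot t(x)$ as a meet over \emph{pairs} $\bigwedge\{s_1(x)\odot s_2(x)\mid s_1,s_2\in S\}$, and then exploits linearity of $[0,1]$ to squeeze this between the diagonal meets $\bigwedge\{s(x)\odot s(x)\mid s\in S\}$ from both sides (choosing for each pair the $s_i$ realizing the minimum, respectively the maximum, of the two values); the same double-indexing with (D1)/(D2) handles (ix) and (x) for general $n$. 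Both routes are valid; yours is shorter and more transparent for the specific codomain $[0,1]$, while the paper's is purely order-algebraic (using only completeness, (D1)/(D2) and the chain property, with no appeal to continuity) and thus fits the algebraic style of the rest of the paper. Your treatment of (vi)--(vii) via monotonicity/distributivity and of (viii) via meet-distribution in a complete lattice coincides with the paper's.
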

\begin{proof} Let us check the conditions (i)-(vi) from Definition \ref{semist}.

(i): Clearly, $t(1)=\bigwedge \{s(1) \mid s\in S\} =\bigwedge \{1 \mid s\in S\}=1$. 

(ii): Assume $x\leq y$. Then  $t(x)=\bigwedge \{s(x) \mid s\in S\} %
\leq\bigwedge \{s(y) \mid s\in S\}=t(y)$. 

(iii): Let $t(x)=\bigwedge \{s(x) \mid s\in S\}=1$ and 
$t(y)=\bigwedge \{s(y) \mid s\in S\}=1$.  
It follows, that, for all $s\in S$, $s(x)=1=s(y)$. Hence also 
$s(x\odot y)=1$. This yields that $t(x\odot y)=\bigwedge \{s(x\odot y) \mid s\in S\}=1$.

(iv, v): Since $[0,1]$ is linearly ordered we have (by taking in the respective 
part of the proof either the minimum of $s_1(x)$ and $s_2(x)$ or 
the  maximum of $s_1(x)$ and $s_2(x)$)
$$
\begin{array}{r c l}
t(x)\odot t(x)&=&\bigwedge \{s_1(x) \mid s_1\in S\} \odot %
\bigwedge \{ s_2(x) \mid s_2\in S\}=%
\bigwedge \{s_1(x) \odot s_2(x) \mid s_1, s_2\in S\} \\[0.1cm]
&\geq&\bigwedge \{s(x) \odot s(x) \mid s\in S\}=%
\bigwedge \{s(x\odot x) \mid s\in S\}= t (x\odot x),
\end{array} 
$$
$$
\begin{array}{r c l}
t(x)\odot t(x)&=&\bigwedge \{s_1(x) \mid s_1\in S\} \odot %
\bigwedge \{ s_2(x) \mid s_2\in S\}=%
\bigwedge \{s_1(x) \odot s_2(x) \mid s_1, s_2\in S\} \\[0.1cm]
&\leq&\bigwedge \{s(x) \odot s(x) \mid s\in S\}=%
\bigwedge \{s(x\odot x) \mid s\in S\}= t (x\odot x),
\end{array} 
$$
$$
\begin{array}{r c l}
t(x)\oplus t(x)&=&\bigwedge \{s_1(x) \mid s_1\in S\} \oplus %
\bigwedge \{ s_2(x) \mid s_2\in S\}=%
\bigwedge \{s_1(x) \oplus s_2(x) \mid s_1, s_2\in S\} \\[0.1cm]
&\geq&\bigwedge \{s(x) \oplus s(x) \mid s\in S\}=%
\bigwedge \{s(x\oplus x) \mid s\in S\}= t (x\oplus x),
\end{array} 
$$

and 
$$
\begin{array}{r c l}
t(x)\oplus t(x)&=&\bigwedge \{s_1(x) \mid s_1\in S\} \oplus %
\bigwedge \{ s_2(x) \mid s_2\in S\}=%
\bigwedge \{s_1(x) \oplus s_2(x) \mid s_1, s_2\in S\} \\[0.1cm]
&\leq&\bigwedge \{s(x) \oplus s(x) \mid s\in S\}=%
\bigwedge \{s(x\oplus x) \mid s\in S\}= t (x\oplus x).
\end{array} 
$$

(vi): Let us compute the following
$$
\begin{array}{r c l}
t(x)\odot t(y)&=&\bigwedge \{s_1(x) \mid s_1\in S\} \odot %
\bigwedge \{ s_2(y) \mid s_2\in S\}=%
\bigwedge \{s_1(x) \odot s_2(y) \mid s_1, s_2\in S\} \\[0.1cm]
&\leq&\bigwedge \{s(x) \odot s(y) \mid s\in S\}\leq %
\bigwedge \{s(x\odot y) \mid s\in S\}= t (x\odot y).
\end{array} 
$$

(vii): Applying the same considerations as in (vi) we have
$$
\begin{array}{r c l}
t(x)\oplus t(y)&=&\bigwedge \{s_1(x) \mid s_1\in S\} \oplus %
\bigwedge \{ s_2(y) \mid s_2\in S\}=%
\bigwedge \{s_1(x) \oplus s_2(y) \mid s_1, s_2\in S\} \\[0.1cm]
&\leq&\bigwedge \{s(x) \oplus s(y) \mid s\in S\}\leq %
\bigwedge \{s(x\oplus y) \mid s\in S\}= t (x\oplus y).
\end{array} 
$$

(viii): Similarly,  $t(x\wedge y)=\bigwedge \{s(x\wedge y) \mid s\in S\} %
=\bigwedge \{s(x)\wedge s(y) \mid s\in S\}=%
\bigwedge \{s(x) \mid s\in S\} \wedge \bigwedge \{ s(y) \mid s\in S\}=%
t(x)\wedge t(y)$. 

(ix): Assume that $x\in A$ and ${n\in \mathbb N}$. We have, 
repeatedly using  the equality (D2) from the Introduction, that 
  
  $$\begin{array}{r c l}
   (\bigwedge\{s(x); s\in S\})^{n} &
  = &\bigwedge\{s_1(x)\odot s_2(x) \odot \dots \odot 
   s_n(x)\mid s_1, \dots s_n\in S\}\\[0.1cm] 
   &\leq&  \bigwedge\{s(x)\odot s(x) \odot \dots \odot 
   s(x)\mid s_1, \dots s_n\in S, \\
   & &\phantom{\bigwedge\{ }%
    s\in \{s_1, \dots, s_n\}, s(x) = 
   \max \{s_1(x), \dots s_n(x)\} \}\\
   &=&\bigwedge\{s(x)^n\mid s\in S\}=\bigwedge\{s(x^n)\mid s\in S\}
    \end{array}
  $$
  and similarly 
$$\begin{array}{r c l}
   (\bigwedge\{s(x); s\in S\})^{n} &
  = &\bigwedge\{s_1(x)\odot s_2(x) \odot \dots \odot 
   s_n(x)\mid s_1, \dots s_n\in S\}\\[0.1cm] 
   &\geq&  \bigwedge\{s(x)\odot s(x) \odot \dots \odot 
   s(x)\mid s_1, \dots s_n\in S, \\
   & &\phantom{\bigwedge\{ }%
    s\in \{s_1, \dots, s_n\}, s(x) = 
   \min \{s_1(x), \dots s_n(x)\} \}\\
   &=&\bigwedge\{s(x)^n\mid s\in S\}=\bigwedge\{s(x^n)\mid s\in S\}
    \end{array}
  $$
  Thus $(\bigwedge\{s(x); s\in S\})^{n}=\bigwedge\{s(x^n)\mid s\in S\}$.

(x): It follows by the same considerations as for (ix) applied to $\oplus$ and repeatedly using  the equality (D1) from the Introduction. 
\end{proof}

\begin{lemma}\label{porovb}  Let $\mathbf{A}$ be an MV-algebra, $s, t$ 
semi-states on  $\mathbf{A}$. Then $t\leq s$ iff 
$t(x)=1$ implies $s(x)=1$ for all $x\in A$.
\end{lemma}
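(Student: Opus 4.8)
The plan is to treat the two implications quite differently: the left-to-right one is immediate, and the converse is reduced to the behaviour of the Teheux terms $t_r$ under semi-states. For the forward direction, suppose $t\leq s$ pointwise and $t(x)=1$ for some $x\in A$; then $1=t(x)\leq s(x)\leq 1$, so $s(x)=1$, and nothing else is needed. For the converse I would argue by contraposition-free contradiction, exploiting Corollary \ref{odhad}.

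The key preliminary observation is that \emph{every semi-state commutes with every term of the clone $T_{\mathbb D}$}. Indeed, conditions (iv) and (v) of Definition \ref{semist} say precisely that a semi-state $s$ satisfies $s(f_1(y))=s(y\odot y)=s(y)\odot s(y)=f_1(s(y))$ and $s(f_0(y))=s(y\oplus y)=s(y)\oplus s(y)=f_0(s(y))$ for all $y\in A$; since every (unary) member of $T_{\mathbb D}$ is a composition $f_{a_k}\circ\dots\circ f_{a_1}$ of $f_0$'s and $f_1$'s, an easy induction on the length of that composition gives $s(u(x))=u(s(x))$ for all $u\in T_{\mathbb D}$ and all $x\in A$. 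In particular, for each dyadic $r\in(0,1)\cap{\mathbb D}$ the term $t_r$ furnished by Corollary \ref{odhad} satisfies $s(t_r(x))=t_r(s(x))$, and the same holds with $t$ in place of $s$.

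Now assume, towards a contradiction, that $t(x)\not\leq s(x)$ for some $x\in A$, i.e.\ $s(x)<t(x)$ in $[0,1]$. Since the dyadic numbers are dense in $[0,1]$, pick a dyadic $r$ with $s(x)<r<t(x)$; then $r>s(x)\geq 0$ and $r<t(x)\leq 1$, so $r\in(0,1)\cap{\mathbb D}$. From $r\leq t(x)$ and Corollary \ref{odhad} we get $t_r(t(x))=1$, hence $t(t_r(x))=t_r(t(x))=1$ by the commutation property established above. The hypothesis then forces $s(t_r(x))=1$, that is $t_r(s(x))=1$, and a second application of Corollary \ref{odhad} yields $r\leq s(x)$, contradicting $r>s(x)$. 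Therefore $t(x)\leq s(x)$ for every $x\in A$, i.e.\ $t\leq s$.

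The only real content here is the commutation observation, namely that semi-states respect doubling and squaring and hence all the terms $t_r$; once that is in hand the argument is just density of the dyadic rationals together with Corollary \ref{odhad}, and I do not anticipate any genuine obstacle beyond keeping the two directions of Corollary \ref{odhad} straight.
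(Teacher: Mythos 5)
Your proposal is correct and follows essentially the same route as the paper: the forward direction is immediate, and the converse picks a dyadic $r$ strictly between $s(x)$ and $t(x)$, applies Corollary \ref{odhad}, and uses the fact that semi-states commute with the terms of $T_{\mathbb D}$ to derive a contradiction. The only difference is that you spell out the induction showing $s(u(x))=u(s(x))$ for $u\in T_{\mathbb D}$, which the paper uses implicitly; that is a welcome clarification but not a different argument.
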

\begin{proof} Clearly,  $t\leq s$ yields the condition
$t(x)=1$ implies $s(x)=1$ for all $x\in A$.

Assume now that $t(x)=1$ implies $s(x)=1$ for all $x\in A$ is valid and 
that there is $y\in A$ such that $s(y) < t(y)$. Thus, there is a 
dyadic number $r\in (0,1)\cap\mathbb D$ such that $s(y)<r< t(y).$ 
By Corollary \ref{odhad} there is a term 
 $t_r$ in $T_{\mathbb D}$ such that
$t_r(s(y))< 1$ and $t_r(t(y))=1$. It follows that 
$s(t_r(y))=t_r(s(y))< 1$ and $t(t_r(y))=t_r(t(y))=1$. The last condition yields that 
$s(t_r(y))=1$, a contradiction.
\end{proof}

\begin{proposition}\label{prusekapr}  Let $\mathbf{A}$ be an MV-algebra, $t$ 
a semi-state on  $\mathbf{A}$ and 
$S_{t}=\{ s: \mathbf{A}\to [0,1]\mid \ s\ \text{is an}\ \text{MV-morphism},  s\geq t\}$. 
Then $t=\bigwedge S_t$.
\end{proposition}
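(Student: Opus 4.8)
The plan is to reduce the statement, via Lemma \ref{porovb}, to a separation property and then feed that property to Proposition \ref{cT4}. First note that $t\le s$ for every $s\in S_t$ by the very definition of $S_t$, hence $t\le\bigwedge S_t$; and $\bigwedge S_t$ is again a semi-state by Lemma \ref{meetstate}. So, applying Lemma \ref{porovb} to the two semi-states $\bigwedge S_t$ and $t$, it is enough to prove that whenever $t(x)<1$ there is some $s\in S_t$ with $s(x)<1$: indeed that gives $(\bigwedge S_t)(x)<1$, hence $\bigwedge S_t\le t$, which together with the reverse inequality finishes the proof.

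So fix $x\in A$ with $t(x)<1$ and choose a dyadic number $r\in(0,1)\cap\mathbb{D}$ with $t(x)<r$. I would take $F=\{a\in A : t(a)=1\}$; that $F$ is a filter of $\mathbf{A}$ is immediate from axioms (i)--(iii) of a semi-state. The crucial observation is that $t$ commutes with the Teheux terms $t_r$: since $t_r$ lies in the clone $T_{\mathbb{D}}$ generated by the unary maps $f_0(y)=y\oplus y$ and $f_1(y)=y\odot y$, it is a finite composition of these two maps, and an induction on the length of that composition, using at each step exactly axioms (iv) and (v) of a semi-state, yields $t(t_r(x))=t_r(t(x))$. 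Because $t(x)<r$, Corollary \ref{odhad} gives $t_r(t(x))\ne 1$, hence $t(t_r(x))\ne 1$, that is, $t_r(x)\notin F$.

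At this point Proposition \ref{cT4}, applied to the filter $F$, the element $x$ and the dyadic number $r$, produces an MV-morphism $s\colon\mathbf{A}\to[0,1]$ with $s(F)\subseteq\{1\}$ and $s(x)<1$. It remains only to see that $s\in S_t$: an MV-morphism is a semi-state, and every $a$ with $t(a)=1$ lies in $F$, hence satisfies $s(a)=1$, so Lemma \ref{porovb} gives $t\le s$. Thus $s\in S_t$ and $s(x)<1$, as wanted, and $t=\bigwedge S_t$ follows.

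I expect the only step needing genuine care to be the identity $t(t_r(x))=t_r(t(x))$, and it is precisely here that the semi-state axioms (iv),(v) are used rather than the weaker lax inequalities (vi),(vii): they say that $t$ commutes with the two generators of $T_{\mathbb{D}}$, which is exactly what makes the Teheux terms usable to separate $t$ from the ultrafilters lying above $F$. Everything else is the formal book-keeping of Lemmas \ref{porovb} and \ref{meetstate} and Proposition \ref{cT4} (note in passing that $F\ne A$ is automatic, since $t_r(x)\notin F$, so no properness hypothesis is needed).
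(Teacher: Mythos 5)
Your proof is correct and follows essentially the same route as the paper: the same filter $F=\{a\mid t(a)=1\}$, the same use of the Teheux term $t_r$ with Corollary \ref{odhad}, the separation via Proposition \ref{cT4}, and Lemma \ref{porovb} to verify $s\in S_t$. The only difference is presentational — you prove the contrapositive of the $1$-set condition and invoke Lemmas \ref{meetstate} and \ref{porovb} at the top level, while the paper argues directly by contradiction with a dyadic $r$ strictly between $t(x)$ and $\bigl(\bigwedge S_t\bigr)(x)$ — and your explicit justification that $t$ commutes with $t_r$ via axioms (iv),(v) fills in a step the paper leaves implicit.
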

\begin{proof} Clearly,  $t\leq\bigwedge S_t$. Assume that there is 
$x\in A$ such that  $t(x)<\bigwedge S_t (x)$. Thus, there is a 
dyadic number $r\in (0,1)\cap\mathbb D$ such that $t(x)<r<\bigwedge S_t (x).$ 
Again by  Corollary \ref{odhad} there is a term 
 $t_r$ in $T_{\mathbb D}$ such that 
$t(t_r(x))=t_r(t(x))<1$. Let us put 
$F=\{z\in A\mid t(z)=1\}$.  
The set $F$ is by the condition (iii)  a filter of $\mathbf{A}$, 
$t_r(x)\not\in F$. Hence there is by Proposition \ref{oddels} an MV-morphism  
 $s:\mathbf{A}\to [0,1]$ such that 
$s(F)\subseteq \{1\}$ and $s(x)< r<1$. It follows by Lemma \ref{porovb} 
that  $t\leq s$, i.e., $s\in S_t$ and  $s(x)< r <\bigwedge S_t (x) \leq  s(x)$, 
a contradiction.
\end{proof}

\begin{corollary}\label{semiismeet} Any semi-state on  an MV-algebra  $\mathbf{A}$ is 
a  strong semi-state.
\end{corollary}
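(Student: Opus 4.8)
The plan is to derive Corollary \ref{semiismeet} as an immediate consequence of the two results just established, Proposition \ref{prusekapr} and Lemma \ref{meetstate}. First I would take an arbitrary semi-state $t$ on $\mathbf{A}$ and form the set $S_t = \{ s : \mathbf{A} \to [0,1] \mid s \text{ is an MV-morphism}, s \geq t\}$, exactly as in Proposition \ref{prusekapr}. That proposition tells us $t = \bigwedge S_t$, so $t$ is expressed as a point-wise meet of a family of maps into $[0,1]$.

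Next I would observe that every member of $S_t$ is an MV-morphism into the unit interval, and, as noted right after Definition \ref{semist}, any such MV-morphism is a strong semi-state. Hence $S_t$ is a (non-empty, since it contains the constant map $1$, or more carefully since $t$ itself is dominated by at least one MV-morphism by the argument in Proposition \ref{prusekapr}) set of strong semi-states on $\mathbf{A}$. Applying the ``strong semi-state'' half of Lemma \ref{meetstate} to the family $S_t$, the point-wise meet $\bigwedge S_t$ is again a strong semi-state on $\mathbf{A}$. Since $t = \bigwedge S_t$, we conclude that $t$ is a strong semi-state, which is exactly the assertion.

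The only point that needs a word of care — and the closest thing to an obstacle — is checking that $S_t$ is non-empty so that Lemma \ref{meetstate} applies; but this is already implicit in the proof of Proposition \ref{prusekapr}, where, from any failure $t(x) < \bigwedge S_t(x)$ one produces via Corollary \ref{oddels} an MV-morphism $s \geq t$, and in particular taking the filter $F = \{z \in A \mid t(z) = 1\}$ and any $x$ with $t(x) < 1$ yields such an $s$; and if $t$ is already the constant $1$ then $t$ is trivially (a restriction of) an MV-morphism-like map and the claim is clear, or one simply notes the constant map $1$ lies in $S_t$. So no genuine difficulty arises, and the corollary follows in two lines once Proposition \ref{prusekapr} and Lemma \ref{meetstate} are in hand.
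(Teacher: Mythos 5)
Your argument is correct and is exactly the route the paper intends: the corollary is stated without its own proof precisely because it is the combination of Proposition \ref{prusekapr}, Lemma \ref{meetstate}, and the remark after Definition \ref{semist} that MV-morphisms into $[0,1]$ are strong semi-states. The only slip is your fallback claim that the constant map $1$ lies in $S_t$ (it is not an MV-morphism on a nontrivial algebra, since it fails to preserve $0$); in the degenerate case $t\equiv 1$ one simply verifies conditions (vi)--(x) directly, which your other remark already covers, so nothing essential is lost.
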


\begin{corollary}\label{nenul} The only  semi-state $s$ on  an MV-algebra  $\mathbf{A}$ 
with $s(0)\not=0$ is the constant function $s(x)=1$ for all $x\in A$.
\end{corollary}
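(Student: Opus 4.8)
The plan is to deduce this directly from Proposition~\ref{prusekapr} together with Corollary~\ref{nenul}'s companion facts about MV-morphisms. Suppose $s$ is a semi-state on $\mathbf{A}$ with $s(0)\neq 0$. By Proposition~\ref{prusekapr} we have $s=\bigwedge S_s$ where $S_s$ is the set of all MV-morphisms $\mathbf{A}\to[0,1]$ lying above $s$. The point is that $S_s$ must be empty: any MV-morphism $u$ satisfies $u(0)=0$, so $u\geq s$ would force $0=u(0)\geq s(0)>0$, a contradiction. Hence $S_s=\emptyset$ and $s=\bigwedge\emptyset$, which is the constant map with value $1$ (the top element of $[0,1]$, since the meet is computed point-wise in $[0,1]$).

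First I would record the trivial inequality $u(0)=0$ for every MV-morphism $u:\mathbf{A}\to[0,1]$, which is immediate from the definition of MV-morphism (preservation of the constant $0$). Next I would invoke Proposition~\ref{prusekapr} applied to the given semi-state $s$ to write $s=\bigwedge S_s$. Then I would argue by the displayed contradiction that no MV-morphism can dominate $s$, so $S_s=\emptyset$. Finally I would observe that the point-wise infimum over the empty family of functions into the complete lattice $[0,1]$ is the constant function $1$, and conclude $s(x)=1$ for all $x\in A$. Conversely, the constant function $1$ is visibly a semi-state (each of (i)--(v) in Definition~\ref{semist} holds trivially since every value is $1$) and has $s(0)=1\neq 0$, so this is genuinely the unique such semi-state.

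I do not expect any real obstacle here: the only subtlety is making sure the convention for the empty meet in $[0,1]$ is the top element, which is standard and consistent with the point-wise formula $t(x)=\bigwedge\{u(x)\mid u\in S_s\}$ used throughout the section. If one prefers to avoid empty meets, one can instead argue contrapositively: if $s$ is not constantly $1$, pick $y$ with $s(y)<1$, choose a dyadic $r$ with $s(y)<r<1$, and use Corollary~\ref{odhad} to get $t_r$ with $s(t_r(y))=t_r(s(y))<1$; then $F=\{z\mid s(z)=1\}$ is a proper filter not containing $t_r(y)$, so Corollary~\ref{oddels} yields an ultrafilter, hence an MV-morphism $u\geq s$ with $u(0)=0$, forcing $s(0)=0$. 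Either route closes the proof immediately.
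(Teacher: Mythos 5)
Your argument is correct and is essentially the paper's own (implicit) derivation: Corollary~\ref{nenul} is exactly the observation that in the decomposition $s=\bigwedge S_s$ from Proposition~\ref{prusekapr} every MV-morphism vanishes at $0$, so $s(0)\neq 0$ forces $S_s=\emptyset$ and the empty point-wise meet in $[0,1]$ is the constant function $1$, which is indeed a semi-state. Your alternative contrapositive route via Corollary~\ref{odhad}, Corollary~\ref{oddels} and Lemma~\ref{porovb} is just an unfolding of the proof of Proposition~\ref{prusekapr} itself, so nothing further is needed.
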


\begin{corollary}\label{consts} The only  semi-state $s$ on  the standard 
MV-algebra  $[0, 1]$  with $s(0)=0$ is the identity function.
\end{corollary}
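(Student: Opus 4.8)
The plan is to derive Corollary~\ref{consts} as a quick consequence of Proposition~\ref{prusekapr} together with the description of MV-morphisms on the standard MV-algebra. First I would let $s$ be a semi-state on $[0,1]$ with $s(0)=0$; I must show $s=\mathrm{id}$. By Proposition~\ref{prusekapr} we have $s=\bigwedge S_s$, where $S_s$ is the set of all MV-morphisms $\varphi\colon[0,1]\to[0,1]$ with $\varphi\geq s$. Since the standard MV-algebra $[0,1]$ has no proper nontrivial MV-morphisms into $[0,1]$ — the only MV-endomorphism of $[0,1]$ is the identity (one way to see this: an MV-morphism fixes every rational of the form $m/n$ in $[0,1]$, because these are definable by MV-terms applied to $0$ and $1$, more precisely $m/n$ is the unique solution in $[0,1]$ of an appropriate equation, and a monotone map of $[0,1]$ into itself fixing a dense set is the identity) — the only candidate element of $S_s$ is $\mathrm{id}$ itself. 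Hence $S_s\subseteq\{\mathrm{id}\}$.

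It remains to check that $\mathrm{id}$ actually lies in $S_s$, i.e.\ that $s\leq\mathrm{id}$, i.e.\ $s(x)\leq x$ for all $x\in[0,1]$; combined with $S_s\neq\emptyset$ and $s=\bigwedge S_s$ this forces $s=\mathrm{id}$. So the real content is the inequality $s(x)\leq x$. Here I would use Lemma~\ref{porovb} in the form: $s\leq\mathrm{id}$ iff $s(x)=1\Rightarrow x=1$. Equivalently, I must show that $x<1$ implies $s(x)<1$. This is where axioms (iv) and (v) of a semi-state, via the dyadic terms, do the work. If $x<1$, pick a dyadic $r\in(0,1)$ with $x<r$; by Corollary~\ref{odhad} there is a term $t_r\in T_{\mathbb D}$ with $t_r(y)=1$ iff $r\leq y$, so $t_r(x)\neq1$, in fact $t_r(x)<1$ since $t_r$ is a composition of $\oplus$- and $\odot$-squarings which, evaluated in $[0,1]$ at a point $<1$, can only reach $1$ on the stated threshold set. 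Now by Proposition~\ref{T4} (or rather its proof, which shows $s$ commutes with the terms $f_0,f_1$ built from doubling and squaring — precisely axioms (v) and (iv)) we get $s(t_r(x))=t_r(s(x))$, and since $t_r(x)<1$ we must have $s(t_r(x))<1$: indeed if $s(t_r(x))=1$ then, writing $t_r=t\oplus t$ as in the proof of Proposition~\ref{T4} and using (v), Lemma~\ref{oT4}-style reasoning would force $t_r(x)=t(x)\oplus t(x)$ to have $s$-value $1$ only when $t_r(x)=1$ already — contradiction. Hence $t_r(s(x))<1$, which by Corollary~\ref{odhad} gives $s(x)<r<1$, as desired.

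Thus for every $x<1$ we have $s(x)<1$; by Lemma~\ref{porovb} this gives $s\leq\mathrm{id}$, so $\mathrm{id}\in S_s$ and $S_s=\{\mathrm{id}\}$, whence $s=\bigwedge S_s=\mathrm{id}$. An even slicker route, avoiding the re-examination of the proof of Proposition~\ref{T4}: since $s$ is a semi-state on the \emph{linearly ordered} MV-algebra $[0,1]$ with $s(0)=0$, one directly uses that for dyadic $r$ the statement ``$t_r(x)=1$ iff $r\leq x$'' is a term identity in $[0,1]$, so $s(t_r(x))$ equals $t_r$ evaluated at $s(x)$ because $s$ respects doublings and squarings (axioms (v),(iv)) and $t_r$ is built solely from these; then $x<1\Rightarrow \exists r,\ x<r\Rightarrow t_r(x)<1\Rightarrow s(t_r(x))<1\Rightarrow t_r(s(x))<1\Rightarrow s(x)<r<1$, and one concludes as before.

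I expect the main obstacle to be the careful justification that $s$ commutes with the dyadic terms $t_r$ on $[0,1]$ — i.e.\ that knowing $s$ preserves only $x\oplus x$ and $x\odot x$ (not arbitrary $\oplus$ and $\odot$) is enough, which is exactly the point of Proposition~\ref{T4} and the clone $T_{\mathbb D}$ being generated by $f_0,f_1$ — together with the elementary but slightly fussy fact that a monotone self-map of $[0,1]$ agreeing with the identity on a dense set (the dyadics, or all rationals in $[0,1]$) is the identity, used to pin down the only member of $S_s$; neither step is deep, but both need to be stated precisely rather than waved at.
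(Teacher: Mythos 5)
Your first paragraph is exactly the intended route: by Proposition \ref{prusekapr}, $s=\bigwedge S_s$, and since the standard MV-algebra is rigid (every MV-endomorphism of $[0,1]$ fixes the rationals and is monotone, hence is the identity), $S_s\subseteq\{\mathrm{id}\}$. But at that point you are already done, more cheaply than you think: if $S_s$ were empty, the pointwise meet over the empty set would be the constant function $1$, contradicting $s(0)=0<1$; hence $S_s=\{\mathrm{id}\}$ and $s=\bigwedge S_s=\mathrm{id}$. The inequality $s\leq\mathrm{id}$ is thus a consequence of Proposition \ref{prusekapr}, not a prerequisite, so your entire second step is redundant.

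Unfortunately, as written that second step also contains a genuine gap. The implication you rely on, namely ``$t_r(x)<1\Rightarrow s(t_r(x))<1$'', is (via Lemma \ref{porovb}) an instance of the very statement $s\leq\mathrm{id}$ you are proving, applied to $y=t_r(x)$ (and such $y$ can be any element of $[0,1)$ when $x<r$), so it cannot be taken for granted; and the justification you sketch does not deliver it. From $s(t_r(x))=1$ and $t_r=t\oplus t$, axiom (v) gives only $s(t(x))\oplus s(t(x))=1$, i.e.\ $s(t(x))\geq 1/2$; and the semi-state analogue of Lemma \ref{oT4} (which does hold here, since $s(0)=0$, $s$ is monotone, satisfies (iv), and $[0,1]$ is a chain) concludes from $s(z)=1$ only that $z\oplus z=1$, i.e.\ $z\geq 1/2$, never $z=1$. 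Applying it to $z=t(x)$ fails because you do not know $s(t(x))=1$, and applying it to $z=t_r(x)$ gives only $t_r(x)\geq 1/2$, which is no contradiction. If you want the direct argument, run the contradiction from $s(x)=1$ with $x<1$, mirroring the proof of Proposition \ref{T4}: then $s(t(x))=t(s(x))=t(1)=1$, the Lemma \ref{oT4}-type argument yields $t_r(x)=t(x)\oplus t(x)=1$, contradicting Corollary \ref{odhad} because $x<r$. Even shorter: if $s(x)=1$ and $x<1$, iterating (iv) gives $s(x^{2^n})=1$ for all $n$ while $x^{2^n}=0$ for large $n$, contradicting $s(0)=0$; equivalently, $\{y\in[0,1]\mid s(y)=1\}$ is a proper filter of the simple MV-algebra $[0,1]$, hence equals $\{1\}$.
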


\begin{remark}\label{dualsemi} {\rm It is transparent that all the preceding notions and results including 
Proposition  \ref{oddels} can be dualized. 
In particular, any dual semi-state, i.e., a map $s:\mathbf{A} \to [0,1]$ satisfying 
conditions (i),(ii),(iv), (v) and the dual condition 
(iii)' $s(x)=0$ and $s(y)=0$ implies $s (x\oplus y)=0$ is a join of extremal states on 
$\mathbf{A}$.}
\end{remark}

\begin{proposition}\label{eqtkapr}  Let $\mathbf{A}$ be an MV-algebra, $s$ 
a state on  $\mathbf{A}$. 
Then the following conditions are equivalent:
\begin{itemize}
\item[(a)] s is a morphism of MV-algebras,
\item[(b)] s satisfies the condition $s(x\wedge x')=s(x)\wedge s(x)'$ for all $x\in A$.,
\item[(c)] s satisfies the condition (iv) from Definition \ref{semist},
\item[(d)] s satisfies the condition (viii) from Definition \ref{semist}
\end{itemize}
\end{proposition}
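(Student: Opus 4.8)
The plan is to prove the chain of equivalences by establishing the implications (a)$\rimply$(b)$\rimply$(c)$\rimply$(a) and (c)$\Leftrightarrow$(d), using the characterization of semi-states from this section together with the known structure of states on MV-algebras. First note that every state $s$ on $\mathbf A$ that is in addition a semi-state (in the sense of Definition \ref{semist}) must, by Corollary \ref{semiismeet}, be a strong semi-state, hence a pointwise meet of MV-morphisms by Proposition \ref{prusekapr}; but a state is \emph{additive}, and an extremal state (MV-morphism) is the only state lying above a given MV-morphism, so such an $s$ is forced to be extremal, i.e.\ an MV-morphism. Thus the real content of the proposition is that each of (b), (c), (d) already \emph{forces} the state $s$ to be a semi-state, whereupon the extremality argument closes the loop.

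Concretely, I would proceed as follows. The implication (a)$\rimply$(b) is immediate since an MV-morphism preserves $\wedge$ and $\neg$, and $x\wedge x' = x\wedge\neg x$. For (b)$\rimply$(c): from $s(x\wedge\neg x)=s(x)\wedge s(x)'$ and the state identity $s(x)+s(\neg x)=1$ (so $s(x)\wedge s(\neg x)=\min(s(x),1-s(x))$), a short computation on the interval $[0,1]$ gives $s(x\odot x)=s(x)\odot s(x)$; here one uses that a state satisfies $s(x\oplus y)+s(x\odot y)=s(x)+s(y)$ and $s(x\wedge y)+s(x\vee y)=s(x)+s(y)$, which reduces everything to real arithmetic with $s(x)$ and $s(\neg x)$. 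For (c)$\rimply$(a): condition (c) is exactly axiom (iv) of Definition \ref{semist}; monotonicity (ii) holds for every state, (i) holds for every state, and (iii) follows from additivity (if $s(x)=s(y)=1$ then $s(x\odot y)=s(x)+s(y)-s(x\oplus y)\geq 1+1-1=1$); finally axiom (v) $s(x\oplus x)=s(x)\oplus s(x)$ follows from (iv) applied to $\neg x$ via De Morgan, $x\oplus x=\neg(\neg x\odot\neg x)$, together with $s\circ\neg = 1-s$. Hence $s$ is a semi-state, so by Corollary \ref{semiismeet} and Proposition \ref{prusekapr} it is a meet of MV-morphisms; since $s$ is additive and any MV-morphism $m\geq s$ must coincide with $s$ (an extremal state cannot strictly dominate a state without the meet dropping below it — this is where one invokes that a convex combination is strictly below its larger component unless they are equal), $s$ itself is an MV-morphism, giving (a). The equivalence (c)$\Leftrightarrow$(d) is handled the same way: (d) is axiom (viii); given (a) it is clear, and conversely (d) plus the state identities again pin down $s(x\odot x)$, or alternatively once (a)$\Leftrightarrow$(c) is in hand one shows (a)$\rimply$(d)$\rimply$(c) directly.

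The main obstacle will be the step showing that a state satisfying merely one of the multiplicative-type identities (b), (c) or (d) is genuinely forced to be a semi-state, and then that a state which is a meet of MV-morphisms must itself be extremal. For the first part the key tool is the pair of valuation identities $s(x\oplus y)+s(x\odot y)=s(x)+s(y)$ and $s(x\vee y)+s(x\wedge y)=s(x)+s(y)$ valid for all states, which convert the hypothesis into an arithmetic constraint in $[0,1]$; the delicate point is verifying axiom (v) from axiom (iv) (or vice versa) using $s\circ\neg=1-s$ and the De Morgan laws $x\oplus x=\neg(\neg x\odot\neg x)$. For the second part, the crucial observation is that if $s=\bigwedge_{i} m_i$ with each $m_i$ an MV-morphism and $s$ is additive, then for a fixed $x$ one has $s(x)=m_{i_0}(x)$ for some $i_0$ depending on $x$; a standard argument (using that $s$ and $m_{i_0}$ are both states and $m_{i_0}\geq s$, so $m_{i_0}-s$ is a nonnegative functional vanishing at $x$, and letting $x$ range shows $m_{i_0}=s$) shows $s$ must equal one of the $m_i$, hence is a morphism.
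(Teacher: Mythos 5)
Your argument is essentially correct, but it closes the cycle of implications by a genuinely different route than the paper, and the comparison is worth recording. The paper proves (a) $\Rightarrow$ (b) $\Rightarrow$ (c) $\Rightarrow$ (d) $\Rightarrow$ (a): its step (c) $\Rightarrow$ (d) is exactly your middle argument (a state satisfying (iv) automatically satisfies (i)--(iii), and (iv) is equivalent to (v) for states, so $s$ is a semi-state and Corollary \ref{semiismeet} yields (viii)), but the paper finishes by quoting \cite[Lemma 3.1]{pulm} for (d) $\Rightarrow$ (a), and it gives no argument at all for (b) $\Rightarrow$ (c). You instead make the proposition self-contained: you supply the (b) $\Rightarrow$ (c) computation, and you deduce (c) $\Rightarrow$ (a) internally from Proposition \ref{prusekapr} --- $s$ is the meet of the MV-morphisms above it, that set is non-empty since $s(0)=0\neq 1$, and an MV-morphism above a state must coincide with it --- thereby reproving the needed special case of Pulmannov\'a's lemma with the machinery of Section \ref{semistate}. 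Two spots should be tightened. First, in (b) $\Rightarrow$ (c) the valuation identities you list do not by themselves bring $x\odot x$ into play; you need an MV-identity such as $x\wedge\neg x=x\odot\neg(x\odot x)$, which together with $(x\odot x)\odot(x\wedge\neg x)=0$ and $(x\odot x)\oplus(x\wedge\neg x)=x$ gives $s(x\odot x)=s(x)-s(x\wedge\neg x)=\max(2s(x)-1,0)=s(x)\odot s(x)$; the same specialization $y=\neg x$ in (viii) settles (d) $\Rightarrow$ (b) $\Rightarrow$ (c). Second, in the final step you neither need (nor can in general assert) that the infimum $\bigwedge S_s$ is attained at each point, and no convex-combination argument is required: if $m\geq s$ pointwise and both are states, then applying the inequality at $\neg x$ and using $m(\neg x)=1-m(x)$ and $s(\neg x)=1-s(x)$ gives $m(x)\leq s(x)$, hence $m=s$. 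With these repairs your proof is complete and, unlike the paper's, does not lean on the external reference.
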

\begin{proof} (a) $\implik$ (b): It is evident. 

(b) $\implik$ (c):

(c) $\implik$ (d): Clearly, any state satisfies conditions (i) and (ii) from  Definition \ref{semist}. 
Let us check the condition (iii). Assume that $s(x)=1=s(y)$. Then 
$s(x\odot y)=s(x'\oplus  y')'=s(x'+ x\wedge y')'\geq (s(x')+ s(x\wedge y'))'=(0+0)'=1$.

By the assumption (b) we have that  the condition (iv) is satisfied and for any state the condition (v) is 
equivalent with (iv). It follows that $s$  is a semi-state. By Corollary \ref{semiismeet} 
$s$ is a  strong semi-state, i.e., (viii) is satisfied.

(d) $\implik$ (a): It follows from \cite[Lemma 3.1]{pulm}.
\end{proof}


\section{Functions between MV-algebras and  their construction}
\label{construction}

This section studies the notion of an {fm-function between MV-algebras} 
({strong fm-function between MV-algebras}). The main purpose of this section 
is to establish in some sense 
a canonical construction of strong fm-function between MV-algebras. This 
construction is an ultimate source of numerous examples. 

\begin{definition}
  By an \textbf{fm-function between MV-algebras} $G$ is meant a 
function  $G:\mathbf{A}_1\to \mathbf{A}_2$ such that 
 $\mathbf{A}_1=(A_1;\oplus_1,\odot_1, \neg_1, 0_1,1_1)$ and 
$\mathbf{A}_2=({A}_2;\oplus_2,\odot_2, \neg_2, 0_2,1_2)$
  are MV-algebras and 
  \begin{itemize}
  \settowidth{\leftmargin}{(iiiii)}
\settowidth{\labelwidth}{(iii)}
\settowidth{\itemindent}{(ii)}
    \item[(FM1)] $G(1_1)=1_2$,
    \item[(FM2)] $x\leq_1 y$ implies $G(x)\leq_2 G(y)$,
    \item[(FM3)] $G(x)=1_2=G(y)$ implies $G(x\odot_1 y)=1_2$,
    \item[(FM4)] $G(x)\odot_2 G(x)=G(x\odot_1 x)$,
    \item[(FM5)] $G(x)\oplus_2 G(x)= G(x\oplus_1 x)$.
  \end{itemize}
  
 If moreover $G$ satisfies conditions 
 \begin{itemize}    
\settowidth{\leftmargin}{(iiiii)}
\settowidth{\labelwidth}{(iii)}
\settowidth{\itemindent}{(ii)}
    \item[(FM6)] $G(x)\odot_2 G(y)\leq G(x\odot_1 y)$,
    \item[(FM7)] $G(x)\oplus_2 G(y)\leq G(x\oplus_1 y)$,
    \item[(FM8)] $G(x)\wedge_2 G(y)= G(x\wedge_1 y)$,
    \item[(FM9)] $G(x^{n})=G(x)^{n}$ for all $n\in{\mathbb N}$,
    \item[(FM10)] $n\times_2 G(x)= G(n\times_1 x)$ for all $n\in{\mathbb N}$,

  \end{itemize}
  we say that $G$ is a
\textbf{strong fm-function between MV-algebras}.

If $G:\mathbf{A}_1\to \mathbf{A}_2$ and $H:\mathbf{B}_1\to \mathbf{B}_2$ are 
fm-functions between MV-algebras, then a {\em morphism between $G$ and $H$} 
 is a pair $(\varphi,\psi)$ of morphism of MV-algebras $\varphi:\mathbf{A}_1 \to \mathbf{B}_1$ 
and $\psi:\mathbf{A}_2 \to \mathbf{B}_2$ such that 
$\psi(G(x))=H(\varphi(x))$, for any $x\in A_1$.
\end{definition}

Note that (FM8) yields (FM2), (FM9) yields (FM4) and (FM10) yields (FM5). Also, 
a composition of  fm-functions (strong fm-functions)  is an fm-function (a strong fm-function) again 
and any morphism of MV-algebras is an  fm-function (a strong fm-function).

The notion of  an fm-function generalizes both the notions of a semi-state and of 
a $\odot$-operator from \cite{paseka} which is 
an fm-function $G$ from  $\mathbf{A}_1$ to itself such that (FM6) is satisfied.

According to both (FM4) and (FM5), $G|_{B(\mathbf{A}_1)}: {B(\mathbf{A}_1)} \to {B(\mathbf{A}_2)}$ 
is an fm-function (a strong fm-function) whenever 
$G$ has the respective property.

\begin{lemma} Let $G:\mathbf{A}_1\to \mathbf{A}_2$  be an fm-function between MV-algebras, 
 $r\in (0,1)\cap {\mathbb D}$. Then 
$t_r(G(x))= G(t_r(x))$ for all $x\in A_1$. 
\end{lemma}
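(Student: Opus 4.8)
The plan is to reduce everything to the defining identities (FM4) and (FM5), since the term $t_r$ is built entirely from the two unary operations $f_0(x)=x\oplus x$ and $f_1(x)=x\odot x$, and these are exactly the operations that $G$ is assumed to commute with on the nose. Recall from Section~\ref{Dyadic} that any $r\in(0,1)\cap{\mathbb D}$ has a finite dyadic decomposition, so by the remark following Lemma~1.14 of \cite{teheux} the term $t_r$ can be taken to be $g_{(a_1,\dots,a_k)}=f_{a_k}\circ\cdots\circ f_{a_1}$ for a suitable finite sequence $(a_1,\dots,a_k)$ of elements of $\{0,1\}$ (indeed, since $t_r(x)=t(x)\oplus t(x)$ as noted in the proof of Proposition~\ref{T4}, one may even assume $a_k=0$, though this is not needed here).

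First I would record the two base-case identities $G(f_0(x))=f_0(G(x))$ and $G(f_1(x))=f_1(G(x))$, which are precisely (FM5) and (FM4) respectively, rewritten in the $f_i$ notation. Then I would argue by induction on the length $k$ of the sequence defining $t_r$. For $k=1$ the claim is one of the two base cases. For the inductive step, write $g_{(a_1,\dots,a_k)}=f_{a_k}\circ g_{(a_1,\dots,a_{k-1})}$; applying $G$ and using the base-case identity for $f_{a_k}$ (with argument $g_{(a_1,\dots,a_{k-1})}(x)$) gives
\[
G\bigl(g_{(a_1,\dots,a_k)}(x)\bigr)=G\bigl(f_{a_k}(g_{(a_1,\dots,a_{k-1})}(x))\bigr)=f_{a_k}\bigl(G(g_{(a_1,\dots,a_{k-1})}(x))\bigr),
\]
and then the induction hypothesis replaces $G(g_{(a_1,\dots,a_{k-1})}(x))$ by $g_{(a_1,\dots,a_{k-1})}(G(x))$, yielding $f_{a_k}(g_{(a_1,\dots,a_{k-1})}(G(x)))=g_{(a_1,\dots,a_k)}(G(x))=t_r(G(x))$, as desired.

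There is really no serious obstacle here: the only point to be careful about is that (FM4) and (FM5) are genuine equalities (not just inequalities), so they may be applied with \emph{any} element as argument, in particular with $g_{(a_1,\dots,a_{k-1})}(x)$ in place of $x$; the weaker conditions (FM2), (FM3) play no role. One might also remark, as a sanity check, that (FM2) alone would not suffice, since $t_r$ is not monotone-definable from $G$-commuting data — it is the exact preservation of $\oplus$-squares and $\odot$-squares that makes the statement work, which is also why this lemma is the technical heart of the passage from fm-functions to the representation theorem.
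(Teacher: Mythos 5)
Your proof is correct and follows essentially the same route as the paper: the paper's proof simply notes that (FM4) and (FM5) give preservation of $x\odot x$ and $x\oplus x$ and that $t_r\in T_{\mathbb D}$ is built inductively from these two operations alone, which is exactly the induction you spell out explicitly on the length of the sequence $(a_1,\dots,a_k)$.
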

\begin{proof} Note that $G(x)\oplus_2 G(x)= G(x\oplus_1 x)$  by (FM5) and 
$G(x)\odot_2 G(x)= G(x\odot_1 x)$ by (FM4). 
Then, since $t_r\in T_{\mathbb D}$ is defined inductively using only 
the operations $(-)\oplus  (-)$ 
and $(-)\odot (-)$, we get  $t_r(G(x))=G(t_r(x))$. 
\end{proof}


By a \textbf{frame}  is meant a triple $(S,T,R)$ where $S,T$ are non-void sets and $R\subseteq S\times T$.  Having an MV-algebra $\mathbf{M}=(M;\oplus,\odot, \neg, 0,1)$ and a non-void set $T$, we can produce the direct power
$\mathbf{M}^T=(M^T;\oplus, \odot, \neg,o,j)$ where the operations 
$\oplus$, $\odot$ and $\neg$ are defined and evaluated on $p,q\in M^T$ componentwise. Moreover, $o, j$ are such elements of $M^T$ that $o(t)=0$ and $j(t)=1$
for all $t\in T$. The direct power $\mathbf{M}^T$ is again an MV-algebra. 

The notion of frame allows us to construct new examples of MV-algebras with a strong operator.

\begin{theorem}\label{nvc}
 Let $\mathbf{M}$ be a linearly ordered complete MV-algebra, 
 $(S, T, R)$ be a frame 
 and $G^{*}$ be a map from 
 ${M}^T$ into ${M}^S$ defined by 
$$G^{*}(p)(s) = \bigwedge\{p(t) \mid t \in T, sRt\},$$ 

\noindent{}for all $p\in M^T$  and $s\in S$.  Then 
$G^*$ is a strong fm-function between MV-algebras which 
has a left adjoint $P^{*}$. In this case, 
for all $q\in M^S$  and $t\in T$,  
$$P^{*}(q)(t) = \bigvee\{q(s) \mid s \in T, sRt\}$$ 
\noindent{}and $P^*:(\mathbf{M}^S)^{op}\to (\mathbf{M}^T)^{op}$ 
is a strong fm-function between MV-algebras. 
\end{theorem}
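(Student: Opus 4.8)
The plan is to verify the fm-function axioms (FM1)--(FM10) for $G^{*}$ directly, exploiting the fact that all operations on $\mathbf{M}^{T}$ and $\mathbf{M}^{S}$ are computed pointwise, so that each axiom reduces to a statement about a single coordinate $s\in S$, i.e. about the infimum $\bigwedge\{p(t)\mid sRt\}$ taken in the linearly ordered complete MV-algebra $\mathbf{M}$. First I would dispose of the easy cases: (FM1) is immediate since $j(t)=1$ for all $t$ and $\mathbf{M}$ is complete so the empty or nonempty meet of $1$'s is $1$; (FM2) and (FM8) follow because $\bigwedge$ is monotone and, using (D2)-type distributivity together with the linearity of $\mathbf{M}$, commutes with finite meets; (FM3) follows from (FM6) once that is established. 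The substantive coordinatewise identities are (FM4): $\bigl(\bigwedge_{sRt}p(t)\bigr)\odot\bigl(\bigwedge_{sRt}p(t)\bigr)=\bigwedge_{sRt}\bigl(p(t)\odot p(t)\bigr)$, and (FM5), (FM9), (FM10) with $\odot$ replaced by $\oplus$ or iterated; and the inequalities (FM6), (FM7): $\bigl(\bigwedge_{sRt}p(t)\bigr)\odot\bigl(\bigwedge_{sRt}q(t)\bigr)\le\bigwedge_{sRt}\bigl(p(t)\odot q(t)\bigr)$.

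For the inequalities (FM6), (FM7) I would argue that for each fixed $t$ with $sRt$ one has $\bigwedge_{sRt}p(t')\le p(t)$ and $\bigwedge_{sRt}q(t')\le q(t)$, hence by monotonicity of $\odot$ (resp. $\oplus$) the left side is $\le p(t)\odot q(t)$ (resp. $\le p(t)\oplus q(t)$) for every such $t$, and taking the meet over $t$ on the right gives the claim; if the set $\{t\mid sRt\}$ is empty, both sides are $1$ (meets and the value $G^*(\cdot)(s)=1$). For the equalities (FM4), (FM5) I would use the crucial hypothesis that $\mathbf{M}$ is a \emph{linearly ordered} complete MV-algebra: the map $x\mapsto x\odot x$ (and $x\mapsto x\oplus x$, and more generally the terms $f_{0},f_{1}$ and their iterates, hence each $t_{r}$) is monotone on a chain, and a monotone map on a complete chain that is moreover "continuous from below/above" in the relevant direction commutes with the meet. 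Concretely, $x\odot x$ preserves arbitrary existing meets by (D2) applied with $a$ ranging appropriately — more carefully, on a chain $\bigl(\bigwedge_{i}x_{i}\bigr)\odot\bigl(\bigwedge_{i}x_{i}\bigr)=\bigwedge_{i,j}(x_{i}\odot x_{j})=\bigwedge_{i}(x_{i}\odot x_{i})$ because for any $i,j$ the pair $\{x_{i},x_{j}\}$ is comparable and $x_{i}\odot x_{j}\ge\min(x_{i},x_{j})\odot\min(x_{i},x_{j})$, which already appears among the terms $x_{k}\odot x_{k}$; the dual computation with $\oplus$ and $\max$ handles (FM5). Iterating gives (FM9) and (FM10), exactly as in the proof of Lemma \ref{meetstate}(ix),(x).

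Next I would establish the adjunction. I would \emph{define} $P^{*}(q)(t)=\bigvee\{q(s)\mid s\in S,\ sRt\}$ (note the relation is the same $R$, read on the other side) and verify the adjunction inequality $P^{*}(q)\le_{T} p \iff q\le_{S} G^{*}(p)$ for all $p\in M^{T}$, $q\in M^{S}$. Unwinding pointwise: $P^{*}(q)\le p$ means $\bigvee_{sRt}q(s)\le p(t)$ for all $t$, i.e. $q(s)\le p(t)$ whenever $sRt$; and $q\le G^{*}(p)$ means $q(s)\le\bigwedge_{sRt}p(t)$ for all $s$, i.e. again $q(s)\le p(t)$ whenever $sRt$. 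These two conditions are literally the same, so $P^{*}\dashv G^{*}$ (equivalently $G^{*}$ has left adjoint $P^{*}$). Finally, to see that $P^{*}:(\mathbf{M}^{S})^{op}\to(\mathbf{M}^{T})^{op}$ is a strong fm-function, I would observe that passing to the opposite MV-algebra interchanges $\oplus\leftrightarrow\odot$, $\wedge\leftrightarrow\vee$, $\le\leftrightarrow\ge$, so that $P^{*}$ being given coordinatewise by the \emph{join} $\bigvee_{sRt}q(s)$ over the relation $R^{-1}$ is, up to this dualization, formally the same construction as $G^{*}$; hence all the verifications of (FM1)--(FM10) just carried out apply verbatim in the dual, in the spirit of Remark \ref{dualsemi}.

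The main obstacle is the equality axioms (FM4)--(FM5) (and their iterates (FM9)--(FM10)): the inequality "$\ge$" in each is immediate from monotonicity, but the reverse inequality genuinely uses that $\mathbf{M}$ is a chain — on a non-linear complete MV-algebra $x\mapsto x\odot x$ need not commute with infima — so I would make sure to invoke linearity precisely at the point where I rewrite $\bigwedge_{i,j}(x_{i}\odot x_{j})$ as $\bigwedge_{i}(x_{i}\odot x_{i})$ by comparing each pair; everything else is a routine pointwise unwinding together with the distributive laws (D1), (D2) and completeness of $\mathbf{M}$.
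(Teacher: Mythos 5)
Your proposal is correct and follows essentially the same route as the paper: the paper disposes of (FM1)--(FM10) by pointing to the pairwise-comparability-in-a-chain argument of Lemma \ref{meetstate} (together with (D1), (D2)), which is exactly the computation you spell out for (FM4), (FM5), (FM9), (FM10), and it establishes the adjoint $P^{*}$ by the same pointwise unwinding of $q\leq G^{*}(p)\Leftrightarrow P^{*}(q)\leq p$ that you give. Your explicit treatment of the empty-$\{t\mid sRt\}$ case and the dualization argument for $P^{*}$ merely fill in details the paper leaves implicit.
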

\begin{proof} %
The conditions (FM1)-(FM10)  can be easily shown by the same considerations as in 
\cite[Theorem 3.4]{paseka} and/or Lemma \ref{meetstate}.

  Moreover, for any $p\in M^{T}$ and $q\in M^{S}$,  we can compute: 
    $$
	\begin{array}{@{}r@{\,}c@{\,}l}q(s)\leq G^*(p)(s)\ \text{for all}\ 
     s\in T&\Longleftrightarrow %
    &q(s)\leq \bigwedge\{p(t) | t \in T, sRt\}%
    \ \text{for all}\ 
     s\in T\\[0.1cm] 
    &\Longleftrightarrow& %
     q(s)\leq p(t)\ \text{for all}\  s, t \in T, sRt\\[0.1cm]
    &\Longleftrightarrow&%
   \bigvee_{i\in I}\{q(s) | s \in T, sRt\}\leq p(t)%
   \ \text{for all}\ 
     t\in T\\[0.1cm]
     &\Longleftrightarrow&%
    P^*(q)(t)\leq p(t)%
   \ \text{for all}\ t\in T.
  \end{array}$$
  
  This yields that $q\leq G^*(p)$ iff $P^*(q)\leq p$. Then 
   $P^{*}$ is a left adjoint of $G^{*}$. Hence $G^{*}$ preserves 
arbitrary meets.


\end{proof}

We say that $G^*:\mathbf{M}^T \to \mathbf{M}^S$ is the {\em canonical strong fm-function 
between MV-algebras induced  by the frame} $(S,T,R)$  
{\em and the MV-algebra $\mathbf{M}$}.

\section{The main theorem and its applications}\label{repres}

Before proving our main theorem, we remark that {\it semisimple} MV-algebras \zmena{\cite{Mun}} are just 
subdirect products of the simple MV-algebras. Any simple MV-algebra is uniquelly embeddable into  the standard MV-algebra on the interval $[0,1]$ of reals. It is known that an MV-algebra is semisimple if and only if the intersection of the set of its maximal (prime) filters is equal to the set $\{1\}.$ Note also 
that any  complete MV-algebra is semisimple.

Hence a semisimple MV-algebra $\mathbf A$ is embedded into $[0,1]^T$  (see \cite{belluce})
where $T$ is the set of all \zmena{ultrafilters} (morphisms into the standard MV-algebra )
and $\pi_{F}(x)=x(F)=x/F\in [0,1]$ for any $x\in \mathbf S\subseteq [0,1]^T$ and 
any $F\in T$; here $\pi_{F}:[0,1]^T\to [0,1]$ is the respective projection onto $[0,1]$.
 \zruseno{Thus for any maximal ultrafilter $A\in T$ we identify
the class $x/F$ with its image in the standard MV-algebra and thus with its image in interval $[0,1]$ of real numbers.}

\begin{theorem}\label{mainth}
Let  $G:\mathbf A_1\to \mathbf A_2$  be an fm-function 
between semisimple MV-algebras, $T$ a set of all MV-morphism from $\mathbf A_1$ 
to the standard MV-algebra $[0,1]$ and $S$ a set of all MV-morphism 
from $\mathbf A_2$ to $[0,1]$. 

Further, let $(S, T, \rho_G)$ be a frame such that 
the relation $\rho_G\subseteq S\times T$ is defined by 
$$s\rho_G t\mbox{ if and only if } s(G(x))\leq t(x)\mbox{ for any } x\in A_1.$$

Then $G$ is representable via the  canonical strong fm-function
$G^*:[0,1]^T \to [0,1]^S$  between MV-algebras induced  by the frame $(S,T,\rho_G)$  
and the standard MV-algebra $[0,1]$, i.e., the following diagram of fm-functions commutes:

$$
\begin{diagram}
\mathbf A_1&%
\rTo(2,0)^{\quad\quad{}G}&&\mathbf A_2&\qquad&\qquad&\\
\dTo(0,3)^{i_{\mathbf A_1}^T}&&&\dTo(0,3)_{{i_{\mathbf A_2}^S}}&\qquad&\qquad&\\
[0,1]^T&\rTo(2,0)_{\quad\quad\quad{}G^*}&&[0,1]^S&.\qquad&\qquad&
\end{diagram}
$$

\end{theorem}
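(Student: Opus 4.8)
The plan is to unwind the commutativity of the diagram pointwise. Fix $x \in A_1$; we must show that for every $s \in S$ we have $s(G(x)) = G^*(i_{\mathbf A_1}^T(x))(s)$. Writing $p = i_{\mathbf A_1}^T(x) \in [0,1]^T$, so that $p(t) = t(x)$ for each MV-morphism $t \in T$, the definition of the canonical fm-function gives
$$G^*(p)(s) = \bigwedge\{\,t(x) \mid t \in T,\ s\rho_G t\,\}.$$
So the identity to be proved is $s(G(x)) = \bigwedge\{\,t(x) \mid t\in T,\ s\rho_G t\,\}$ for all $s\in S$.

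The inequality $s(G(x)) \leq \bigwedge\{\,t(x)\mid s\rho_G t\,\}$ is immediate from the very definition of $\rho_G$: if $s\rho_G t$ then $s(G(x)) \leq t(x)$ for all $x$, so $s(G(x))$ is a lower bound of the set on the right. The content is the reverse inequality, equivalently: for each $s\in S$ and each $x\in A_1$ with $s(G(x)) < 1$ — and more precisely for each dyadic $r$ with $s(G(x)) < r$ — there must exist some $t\in T$ with $s\rho_G t$ and $t(x) < r$. This is where I would invoke the filter/ultrafilter machinery. Consider the filter $F = \{\,z\in A_1 \mid s(G(z)) = 1\,\}$ of $\mathbf A_1$; it is indeed a filter by (FM1), (FM2), (FM3). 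Since $s(G(x)) < 1$, Proposition~\ref{T4} (applied in the standard algebra, after composing with $s$, using that $G$ commutes with the terms $t_r$ by the Lemma above) produces a dyadic $r\in(0,1)\cap\mathbb D$ with $t_r(G(x)) = G(t_r(x)) \notin F$; concretely $s(G(x)) < r$. Then Corollary~\ref{oddels} yields an ultrafilter $U \supseteq F$ of $\mathbf A_1$ with $x/U < r < 1$; let $t\in T$ be the associated MV-morphism. It remains to check $s\rho_G t$, i.e. $s(G(y)) \leq t(y)$ for all $y\in A_1$. If $s(G(y)) = 1$ then $y\in F\subseteq U$, so $t(y) = 1$ and the inequality holds. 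If $s(G(y)) < 1$, pick a dyadic $q$ with $s(G(y)) < q$; then $G(t_q(y)) = t_q(G(y)) \notin F$ by the same argument, so $t_q(y)\notin U$ (as $F\subseteq U$), hence $t(y) = t_q^{-1}$-below $q$, i.e. $t(y) < q$; since this holds for every dyadic $q > s(G(y))$ we get $t(y)\leq s(G(y))$. Thus $s\rho_G t$, and $t(x) < r$, giving the desired witness; letting $r$ range over all dyadics above $s(G(x))$ forces $\bigwedge\{\,t(x)\mid s\rho_G t\,\}\leq s(G(x))$.

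Combining the two inequalities gives $s(G(x)) = G^*(i_{\mathbf A_1}^T(x))(s)$ for all $s\in S$ and all $x\in A_1$, which is exactly $i_{\mathbf A_2}^S(G(x)) = G^*(i_{\mathbf A_1}^T(x))$; this is the commutativity of the square. Both vertical maps are the canonical semisimple embeddings into powers of $[0,1]$, so they are MV-morphisms and in particular fm-functions, and $G^*$ is a (strong) fm-function by Theorem~\ref{nvc}; hence the diagram is a diagram of fm-functions as claimed.

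The main obstacle is the verification that the ultrafilter $t$ produced by Corollary~\ref{oddels} actually satisfies $s\rho_G t$: one has to show the inclusion $F = \{z : s(G(z)) = 1\}\subseteq U$ is not merely one fact among many but exactly the condition that upgrades to the full inequality $s(G(y))\leq t(y)$ for all $y$, and this step is precisely where Proposition~\ref{T4} (the characterization of $s(\cdot) = 1$ by the terms $t_r$) together with the fact that an fm-function commutes with every $t_r$ is essential. Everything else — the easy inequality, the identification of $F$ as a filter, and the assembly of the commuting square — is routine.
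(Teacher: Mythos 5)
Your overall strategy is the right one and is essentially the paper's: the easy inequality $s(G(x))\le\bigwedge\{t(x)\mid s\rho_G t\}$ from the definition of $\rho_G$, and for the converse the filter $F=\{z\in A_1\mid s(G(z))=1\}$, the terms $t_r$ (which commute with $G$ by (FM4)--(FM5)), and Corollary \ref{oddels} to manufacture, for each dyadic $r>s(G(x))$, an MV-morphism $t$ with $t(F)\subseteq\{1\}$ and $t(x)<r$. The paper packages exactly this as ``$s\circ G$ is a semi-state'' plus Proposition \ref{prusekapr}, whose proof is the $F$ / Corollary \ref{oddels} / Lemma \ref{porovb} route you inline. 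However, your verification of the crucial claim $s\rho_G t$ contains a genuine error. In the case $s(G(y))<1$ you argue: $t_q(y)\notin F$, ``so $t_q(y)\notin U$ (as $F\subseteq U$)''. This inference is invalid --- $U$ is a \emph{larger} filter than $F$, so it may well contain elements outside $F$; nothing forces $t_q(y)\notin U$. Worse, even if the step were sound, the inequality you extract, $t(y)\le s(G(y))$, is the \emph{reverse} of what $s\rho_G t$ requires, namely $s(G(y))\le t(y)$; it contributes nothing to the relation you must check (and it is in general false for the ultrafilter $U$ produced, since it would force $t=s\circ G$, i.e.\ $s\circ G$ to be an MV-morphism).

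The correct upgrade from the single fact $F\subseteq U$ to the full inequality $s\circ G\le t$ is exactly the paper's Lemma \ref{porovb}: suppose $t(y)<s(G(y))$ for some $y$, choose a dyadic $q$ with $t(y)<q<s(G(y))$; then $t_q(s(G(y)))=1$ by Corollary \ref{odhad}, hence $s(G(t_q(y)))=s(t_q(G(y)))=1$, i.e.\ $t_q(y)\in F\subseteq U$, so $t(t_q(y))=t_q(t(y))=1$, which forces $q\le t(y)$ --- a contradiction. Note the quantifier choice: the dyadic is taken \emph{between} $t(y)$ and $s(G(y))$ under a hypothesis to be refuted, not above $s(G(y))$. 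With this replacement (or simply by citing Lemma \ref{porovb}, as the paper in effect does through Proposition \ref{prusekapr}), your proof goes through. Two smaller points: the phrase ``$t_r(G(x))=G(t_r(x))\notin F$'' is a type slip, since these elements lie in $A_2$ while $F\subseteq A_1$; what you mean (and use) is $t_r(x)\notin F$, justified by $s(G(t_r(x)))=t_r(s(G(x)))<1$. And your assertion that $F$ is a filter ``by (FM1), (FM2), (FM3)'' glosses closure under $\odot$ in the same way the paper's ``$s\circ G$ is a semi-state'' does (condition (iii) there concerns $s(G(x))=1$, not $G(x)=1$ as in (FM3)), so it matches the paper's level of detail but deserves a word if you want the argument self-contained.
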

\begin{proof} Assume that $x\in A_1$ and $s\in S$. Then 
${i_{\mathbf A_2}^S}(G(x))(s)=s(G(x))\leq t(x)$ for all $t\in T$, $(s,t)\in \rho_G$. It 
follows that ${i_{\mathbf A_2}^S}(G(x))\leq G^{*}({i_{\mathbf A_1}^T}(x))$. 

Note that $s\circ G$ is a semi-state on $\mathbf A_1$ and by Proposition 
\ref{prusekapr} we get that 
$$
\begin{array}{r c l}
s\circ G&=&\bigwedge \{ t: \mathbf{A}_1\to [0,1]\mid \ t\ \text{is an}\ \text{MV-morphism},  
t\geq s\circ G\}\\[0.1cm]
&=&\bigwedge \{ t\in T\mid \ (s, t)\in  \rho_G\}.
\end{array}$$
This yields that actually ${i_{\mathbf A_2}^S}(G(x))= G^{*}({i_{\mathbf A_1}^T}(x))$. 
\end{proof}

\begin{proposition}\label{duals} For any MV-algebra $\mathbf A_1$, 
any semisimple MV-algebra $\mathbf A_2$  with  a set $S$ of all MV-morphism 
from $\mathbf A_2$ to $[0,1]$ and any map 
$G:A_1\to  A_2$ the following conditions are 
equivalent:
\begin{enumerate}
\item[{\rm(i)}] $G$ is an fm-function between MV-algebras.
\item[{\rm(ii)}] $G$ is  a strong fm-function between MV-algebras.
\end{enumerate}
\end{proposition}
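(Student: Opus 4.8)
The plan is to prove the nontrivial implication $(i)\implik(ii)$, since $(ii)\implik(i)$ is immediate from the definitions (the strong axioms include or imply the basic ones). So assume $G:\mathbf A_1\to\mathbf A_2$ is an fm-function between MV-algebras, with $\mathbf A_2$ semisimple. First I would use semisimplicity of $\mathbf A_2$ to reduce everything to pointwise statements: since $\mathbf A_2$ embeds into $[0,1]^S$ via $i_{\mathbf A_2}^S$, it suffices to verify each of the strong conditions (FM6)--(FM10) after composing with every projection $\pi_s$, i.e.\ for every $s\in S$. Fix such an $s$. The composite $s\circ G:\mathbf A_1\to[0,1]$ satisfies (FM1)--(FM5) because $G$ does and $s$ is an MV-morphism; that is exactly the statement that $s\circ G$ is a semi-state on $\mathbf A_1$ in the sense of Definition \ref{semist}.

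Now the key step: by Corollary \ref{semiismeet}, every semi-state on an MV-algebra is a \emph{strong} semi-state, so $s\circ G$ satisfies the additional conditions (vi)--(x) of Definition \ref{semist}. These translate directly: for all $x,y\in A_1$ and $n\in\mathbb N$,
$$
(s\circ G)(x)\odot(s\circ G)(y)\le(s\circ G)(x\odot y),\qquad
(s\circ G)(x)\oplus(s\circ G)(y)\le(s\circ G)(x\oplus y),
$$
$$
(s\circ G)(x\wedge y)=(s\circ G)(x)\wedge(s\circ G)(y),\qquad
(s\circ G)(x^n)=\big((s\circ G)(x)\big)^n,\qquad
(s\circ G)(n\times x)=n\times(s\circ G)(x).
$$
Rewriting $(s\circ G)(z)$ as $\pi_s\big(i_{\mathbf A_2}^S(G(z))\big)$ and using that $\pi_s$ is an MV-morphism, these say precisely that the elements $i_{\mathbf A_2}^S\big(G(x)\odot_2 G(y)\big)$ and $i_{\mathbf A_2}^S\big(G(x\odot_1 y)\big)$ agree after applying $\pi_s$ (and similarly for the other four), with the appropriate inequality for (FM6), (FM7).

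Finally I would reassemble: since this holds for every $s\in S$ and the family $\{\pi_s\}_{s\in S}$ is jointly order-reflecting and jointly injective on the image of $i_{\mathbf A_2}^S$ (this is what semisimplicity of $\mathbf A_2$ buys us, via \cite{belluce}), the pointwise inequalities and equalities lift to $\mathbf A_2$ itself, giving (FM6)--(FM10) for $G$. Hence $G$ is a strong fm-function. The one place that requires a little care, and which I expect to be the main obstacle to a fully rigorous write-up, is making sure the \emph{order-theoretic} identities (FM8)--(FM10) genuinely reduce to the pointwise ones: this works because finite meets, $n$-th $\odot$-powers and $n$-fold $\oplus$-sums are all computed coordinatewise in $[0,1]^S$, so $i_{\mathbf A_2}^S$ being an MV-embedding (hence a lattice embedding) lets us reflect these operations; no completeness of $\mathbf A_2$ is needed since only finitary operations appear. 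All the real work is already done in Corollary \ref{semiismeet}, so the proof is essentially a coordinatewise application of that corollary.
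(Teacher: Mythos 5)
Your proposal follows the paper's own route exactly: compose $G$ with each extremal state $s\in S$ of $\mathbf A_2$, claim that $s\circ G$ is a semi-state, upgrade it to a strong semi-state via Corollary \ref{semiismeet}, and then reflect (FM6)--(FM10) along the order-reflecting embedding $i_{\mathbf A_2}^S$. The reassembly step you flagged as delicate is in fact unproblematic: the operations occurring in (FM6)--(FM10) are finitary and computed coordinatewise in $[0,1]^S$, so order-reflection suffices there.

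The genuine gap is at the step you dispose of in one line: that $s\circ G$ satisfies condition (iii) of Definition \ref{semist} ``because $G$ satisfies (FM3) and $s$ is an MV-morphism''. (FM3) has the hypothesis $G(x)=1_2=G(y)$, whereas what you need is the weaker hypothesis $s(G(x))=1=s(G(y))$; since $s(G(x))=1$ does not force $G(x)=1_2$, (FM3) cannot be invoked, and none of (FM1), (FM2), (FM4), (FM5) supplies the missing inference. This is not a removable technicality. Take $\mathbf A_1=[0,1]^{\mathbb N}$, $\mathbf A_2=[0,1]^2$ (both semisimple) and $G(x)=(\limsup_n x_n,\ \liminf_n x_n)$. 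Then (FM1)--(FM5) hold: $\limsup$ and $\liminf$ commute with the continuous monotone real maps $z\mapsto z\odot z$ and $z\mapsto z\oplus z$, and $G(x)=1_2$ forces $x_n\to 1$, so (FM3) is satisfied. Yet for $x=(1,0,1,0,\dots)$ and $y=(0,1,0,1,\dots)$ one gets $G(x)\odot G(y)=(1,0)\not\le(0,0)=G(x\odot y)$, so (FM6) fails and $\pi_1\circ G=\limsup$ violates (iii). Thus the composite-is-a-semi-state claim (which the paper's proof also asserts without justification, so you have reproduced its approach faithfully) breaks down for bare fm-functions; it does go through if $G$ is additionally assumed to satisfy (FM6) (as tense operators and $\odot$-operators do), since then $s(G(x\odot y))\ge s(G(x))\odot s(G(y))=1$. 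As written, your argument needs that extra hypothesis, and without it the stated equivalence itself is in doubt.
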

\begin{proof} (i) $\Longrightarrow$ (ii): Note that the composition
$\pi_{s}\circ {i_{\mathbf A_2}^S}\circ G$ is 
 a strong semi-state for any $s\in T$.  It follows that 
${i_{\mathbf A_2}^S}\circ G$ is 
 a strong fm-function between MV-algebras.
Since the embedding ${i_{\mathbf A_2}^S}:\mathbf A_2 \to [0,1]^{S}$ 
reflects order we obtain that conditions (FM6)-(FM10) are satisfied.

\noindent{}(ii) $\Longrightarrow$ (i): It is evident.
\end{proof}

\begin{open} Find MV-algebras $\mathbf A_1$ and $\mathbf A_2$ with 
an  fm-function $G$ between them  such that  $G$ is not  a strong fm-function. 
\end{open}

Note that our approach of using semi-states in the above proof of 
Theorem \ref{mainth} also covers the main result of the paper \cite{paseka} which 
is Theorem 4.5 from \cite{paseka}.

\begin{theorem}  \cite[Theorem 4.5]{paseka}\label{strongreprest}
{\bf (Representation theorem for MV-algebras with an $\odot$-operator)}
For any semisimple MV-algebra $\mathbf A$ with an $\odot$-operator $G$,  
$\mathbf A$ is embeddable via 
MV-operator morphism $i_{\mathbf A}^{T}$ into the 
canonical MV-algebra ${\mathcal L}_G=([0,1]^{T};G^{*})$ with 
a strong operator $G^{*}$ induced by the 
canonical frame  $(T,\rho_G)$ and the standard MV-algebra $[0,1]$.
Further, for all $x\in M$ and for all $s\in T$, 
$s(G(x))=G^{*}((t(x))_{t\in T})(s)$. 
\end{theorem}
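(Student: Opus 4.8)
The plan is to derive Theorem \ref{strongreprest} as a special case of the already-established Theorem \ref{mainth}. The key observation is that an $\odot$-operator $G$ on a semisimple MV-algebra $\mathbf A$ is, by definition, an fm-function from $\mathbf A$ to itself that additionally satisfies (FM6). In particular it is an fm-function between the semisimple MV-algebras $\mathbf A_1 = \mathbf A_2 = \mathbf A$, so Theorem \ref{mainth} applies verbatim with $T = S$ the set of all MV-morphisms from $\mathbf A$ to $[0,1]$ and with the frame $(T,T,\rho_G)$, where $s\,\rho_G\,t$ iff $s(G(x))\leq t(x)$ for all $x\in A$. Since $\mathbf A$ is semisimple, the embedding $i_{\mathbf A}^T\colon \mathbf A\to [0,1]^T$ (via $x\mapsto (t(x))_{t\in T}$) is injective, so we genuinely obtain an embedding.

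The steps, in order, are as follows. First I would recall that an $\odot$-operator is an fm-function satisfying (FM6), hence Proposition \ref{duals} (applied with $\mathbf A_1=\mathbf A_2=\mathbf A$, which is semisimple) shows it is automatically a \emph{strong} fm-function; this is needed so that the target carries a \emph{strong} operator $G^*$ in the sense required by the statement. Second, I would invoke Theorem \ref{nvc} with the frame $(T,T,\rho_G)$ and the standard MV-algebra $[0,1]$ to obtain the canonical strong fm-function $G^*\colon [0,1]^T\to[0,1]^T$, and note that by definition this makes ${\mathcal L}_G=([0,1]^T;G^*)$ an MV-algebra with a strong operator. Third, I would apply Theorem \ref{mainth} directly: it yields that the square
$$
\begin{diagram}
\mathbf A&\rTo(2,0)^{\quad\quad G}&&\mathbf A&\\
\dTo(0,3)^{i_{\mathbf A}^T}&&&\dTo(0,3)_{i_{\mathbf A}^T}&\\
[0,1]^T&\rTo(2,0)_{\quad\quad\quad G^*}&&[0,1]^T&
\end{diagram}
$$
commutes, i.e. $i_{\mathbf A}^T\circ G = G^*\circ i_{\mathbf A}^T$. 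Reading off the $s$-th coordinate of this identity evaluated at $x\in A$ gives exactly $s(G(x)) = G^*((t(x))_{t\in T})(s)$, which is the final displayed formula of the theorem.

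The only genuinely new content beyond citing Theorem \ref{mainth} is the bookkeeping that the commuting square is an identity of \emph{MV-operator morphisms} (not merely of plain maps), i.e. that $i_{\mathbf A}^T$ is simultaneously an MV-morphism and intertwines $G$ with $G^*$; but this is immediate since $i_{\mathbf A}^T$ is an MV-embedding (semisimplicity) and the square commutes by Theorem \ref{mainth}. I do not anticipate a real obstacle here: the theorem is explicitly flagged in the text as a corollary of Theorem \ref{mainth}, and the proof is essentially a translation of vocabulary ($\odot$-operator $\leadsto$ fm-function with (FM6); canonical frame $(T,\rho_G)\leadsto$ frame $(T,T,\rho_G)$; ${\mathcal L}_G \leadsto [0,1]^T$ with $G^*$). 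The mildest subtlety worth stating explicitly is the use of Proposition \ref{duals} to upgrade $G$ to a strong fm-function, since the word "strong" appears in the conclusion; everything else is a direct specialization.
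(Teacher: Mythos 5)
Your derivation coincides with the paper's own route: Theorem \ref{strongreprest} is presented there exactly as a corollary of Theorem \ref{mainth}, obtained by specializing to $\mathbf A_1=\mathbf A_2=\mathbf A$, $S=T$ and the frame $(T,T,\rho_G)$, with injectivity of $i_{\mathbf A}^{T}$ coming from semisimplicity and the coordinatewise reading of the commuting square giving $s(G(x))=G^{*}((t(x))_{t\in T})(s)$. One cosmetic remark: the strongness of $G^{*}$ is already guaranteed by the canonical construction in Theorem \ref{nvc}, so your appeal to Proposition \ref{duals} to upgrade $G$ itself is correct but not actually needed for the stated conclusion.
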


The next Theorem, which is a solution of 
a half of the Open problem 5.1 in \cite{7}, was first proved by the first author of the paper 
for tense operators satisfying (FM9) and (FM10) and then, based on the idea of  the first author, 
proved in the full generality by the second author as 
Theorem 5.5 from \cite{paseka}, can be written as follows.

\begin{theorem}\label{maintense}
Let  $\mathbf A$  be a semisimple MV-algebra with tense operators $G$ and $H$. Then $(\mathbf S,G,H)$ can be embedded into the tense MV-algebra $([0,1]^T,G^*,H^*)$ induced by the frame $(T,\rho_G),$ where $T$ is the set of all maximal proper filters and the relation $\rho_G$ is defined by 
$$A\rho_G B\mbox{ if and only if } G(x)/A\leq x/B\mbox{ for any } x\in S.$$
\end{theorem}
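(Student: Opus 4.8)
The plan is to deduce Theorem \ref{maintense} from Theorem \ref{mainth} by applying it twice: once to the fm-function $G$ and once to the fm-function $H$, using the tense axioms (vi) to glue the two frames into a single one. First I would observe that a tense MV-algebra $(\mathbf A, G, H)$ is in particular a semisimple MV-algebra equipped with two fm-functions $G,H:\mathbf A\to\mathbf A$ (axioms (i)--(v) of Definition \ref{tense} are exactly (FM1)--(FM5) for an endofunction, the order-preservation being derived from (i) and (ii) as remarked after the definition). Hence by Proposition \ref{duals} both $G$ and $H$ are automatically strong fm-functions, and by Theorem \ref{mainth} each is representable. Since $\mathbf A_1=\mathbf A_2=\mathbf A$ here, the sets $S$ and $T$ in Theorem \ref{mainth} both equal the set $T$ of all MV-morphisms $\mathbf A\to[0,1]$, equivalently (by the one-to-one correspondence recalled in Section \ref{improve}) the set of maximal proper filters of $\mathbf A$, and under the identification of $x/A$ with its image in $[0,1]$ the relation $\rho_G$ defined in Theorem \ref{mainth} becomes exactly the relation $A\rho_G B \iff G(x)/A\le x/B$ for all $x\in A$ stated in Theorem \ref{maintense}.

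Next I would carry out the following steps. Apply Theorem \ref{mainth} to $G$ with the frame $(T,T,\rho_G)$: this gives that $i_{\mathbf A}^T(G(x)) = G^*(i_{\mathbf A}^T(x))$ for all $x$, where $G^*(p)(s)=\bigwedge\{p(t)\mid s\rho_G t\}$, i.e. $G^*$ is precisely the operator induced pointwise by the time frame $(T,\rho_G)$ in the sense of the Theorem of \cite{7} quoted in Section \ref{Preliminaries}. Then I would show that the relevant relation for $H$ is the \emph{converse} of $\rho_G$. This is the heart of the argument and uses axiom (vi): I claim $A\rho_G B \iff B\rho_H A$, where $\rho_H$ would a priori be defined by $B\rho_H A \iff H(x)/B \le x/A$ for all $x$. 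One direction: suppose $A\rho_G B$, i.e. $G(x)/A\le x/B$ for all $x$; I want $H(y)/B\le y/A$ for all $y$. Apply the hypothesis to $x=\neg H(\neg y)$: we get $G(\neg H(\neg y))/A \le \neg H(\neg y)/B$, hence $\neg G\neg H(\neg y)/A \ge H(\neg y)/B$; by (vi), $\neg G\neg H(\neg y)\le \neg y$ pointwise... here I must be a little careful, since (vi) gives $\neg G\neg H(\neg y)\le \neg y$, so $H(\neg y)/B \le \neg G\neg H(\neg y)/A \le \neg y/A$, and replacing $y$ by $\neg y$ (and using $\neg\neg=\mathrm{id}$) yields $H(y)/B\le y/A$, which is $B\rho_H A$. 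The reverse implication is symmetric using the other half of (vi), $\neg H\neg G(x)\le x$. Consequently $H^*$, the operator induced by $\rho_H$ on $[0,1]^T$, satisfies $H^*(p)(s)=\bigwedge\{p(t)\mid t\rho_G s\}$, which is exactly the $H^*$ of the frame $(T,\rho_G)$ as in the quoted Theorem.

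Finally I would assemble the pieces: the embedding $i_{\mathbf A}^T:\mathbf A\to[0,1]^T$ is an injective MV-morphism (semisimplicity, as recalled just before Theorem \ref{mainth}), and by the two applications of Theorem \ref{mainth} it additionally satisfies $i_{\mathbf A}^T(G(x))=G^*(i_{\mathbf A}^T(x))$ and $i_{\mathbf A}^T(H(x))=H^*(i_{\mathbf A}^T(x))$; hence it is an embedding of tense MV-algebras $(\mathbf A,G,H)\hookrightarrow([0,1]^T,G^*,H^*)$, and $([0,1]^T,G^*,H^*)$ is the tense MV-algebra induced by the frame $(T,\rho_G)$ by the Theorem of \cite{7}. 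Writing $\mathbf S$ for $\mathbf A$ (the ambient semisimple algebra), this is exactly the assertion. I expect the main obstacle to be the bookkeeping in the step identifying $\rho_H$ with the converse of $\rho_G$: one has to be scrupulous about where $\neg$ is applied, about the fact that $i_{\mathbf A}^T$ being an MV-morphism makes $G(x)/A$ literally equal $\pi_A(i_{\mathbf A}^T(Gx))$ only \emph{after} invoking Theorem \ref{mainth}, and about the harmless but easy-to-garble substitutions $y\mapsto\neg y$; everything else is a direct citation of Theorem \ref{mainth}, Proposition \ref{duals}, and the construction theorem of \cite{7}.
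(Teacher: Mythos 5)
Your proposal is correct and follows essentially the same route as the paper: both prove that $\rho_H$ (defined by $B\rho_H A$ iff $H(x)/B\le x/A$) is the converse of $\rho_G$ by combining axiom (vi) of Definition \ref{tense} with a negation flip, and then conclude by invoking Theorem \ref{mainth}. Your write-up is in fact somewhat more explicit than the paper's, which compresses the final assembly into ``the remaining part follows from Theorem \ref{mainth}'' while you spell out the two applications of that theorem (to $G$ and to $H$) and the identification of the $\rho_H$-induced operator with the $H^*$ of the frame $(T,\rho_G)$.
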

\begin{proof}
First, let us  define a second relation $\rho_H\subseteq T^2$ by the stipulation:
$$B\rho_H A\mbox{ if and only if } H(x)/B\leq x/A \mbox{ for any } x\in S.$$
\begin{claim}
The equality $\rho_G=\rho_H^{-1}$ holds.
\end{claim}
\begin{proof}
Let us suppose that $A\rho_G B$ for some $A,B\in T.$ Due to Definition \ref{tense} vi) we have $\neg G\neg H(x)\leq x$ and $\neg x/A\leq G\neg H(x)/A.$ $A\rho_G B$ yields $G\neg H(x)/A\leq \neg H(x)/B$ and together $\neg x/A\leq \neg H(x)/B$ yields $H(x)/B\leq x/A$ for any $x\in S.$

Due to the definition of $\rho_H$ we have $B\rho_H A$ and $\rho_G\subseteq \rho_H^{-1}.$  Analogously we can prove the second inclusion.

\end{proof}

The remaining part follows from Theorem \ref{mainth}. 
Basically, the obtained equations $G^*(x) = G(x)$ and $H^*(x)=H(x)$ finish the proof.
\end{proof}

 \begin{theorem}
 a) If $([0,1]^T,G^*,H^*)$ is a tense MV-algebra induced by a time frame $(T,\rho),$ then
 \begin{itemize}
 \item[(i)] if $\rho$ is reflexive then $G^*(x)\leq x$ and $H^*(x)\leq x$ hold for any $x\in [0,1]^T,$
 \item[(ii)] if $\rho $ is symmetric then $G^*(x)=H^*(x)$ holds for any $x\in [0,1]^T$,
\item[(iii)] if $\rho$ is transitive then $G^*G^*(x)\geq G^*(x)$ and $H^*H^*(x)\geq H^*(x)$ hold for any $x\in [0,1]^T$.
 \end{itemize} 

b) Let $(\mathbf S,G,H)$ be a semisimple tense MV-algebra and $(T,\rho_G)$  the
time frame   which induces the tense MV-algebra  $([0,1]^T, G^*, H^*)$ by Theorem \ref{maintense}. Then 
\begin{itemize}
 \item[(i)] if $G(x)\leq x$ and $H(x)\leq x$ hold for any $x\in S$ then $\rho_G$ is reflexive,
 \item[(ii)] if $G(x)=H(x)$ holds for any $x\in S$ then $\rho_G $ is symmetric,
\item[(iii)] if $GG(x)\geq G(x)$ and $HH(x)\geq H(x)$ hold for any $x\in S$ then $\rho_G$ is transitive.
 \end{itemize}
 \end{theorem}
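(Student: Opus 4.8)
The plan is to prove the six statements in parts a) and b) by straightforward manipulations with the pointwise formulas for $G^*$, $H^*$ and the defining relation $\rho$ (resp. $\rho_G$), together with Theorem~\ref{maintense} and its Claim. Throughout I will use that the standard MV-algebra $[0,1]$ is a complete chain, so arbitrary meets and joins behave as infima and suprema of real numbers, and that $G^*$ and $H^*$ are monotone (this follows from (FM2), which they satisfy by Theorem~\ref{nvc}, or directly from the pointwise meet formula).

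\textbf{Part a).} For (i), if $\rho$ is reflexive then $i\rho i$ for every $i\in T$, so $x(i)$ is one of the terms in the meet defining $G^*(x)(i)=\bigwedge_{i\rho j}x(j)$; hence $G^*(x)(i)\leq x(i)$, and symmetrically for $H^*$ using $\bigwedge_{j\rho i}x(j)$. For (ii), if $\rho$ is symmetric then $i\rho j\iff j\rho i$, so the index sets $\{j\mid i\rho j\}$ and $\{j\mid j\rho i\}$ coincide for each $i$; the two pointwise meets are therefore literally the same, giving $G^*(x)=H^*(x)$. For (iii), suppose $\rho$ is transitive and fix $i\in T$. Then $G^*G^*(x)(i)=\bigwedge_{i\rho j}G^*(x)(j)=\bigwedge_{i\rho j}\bigwedge_{j\rho k}x(k)=\bigwedge\{x(k)\mid \exists j\;(i\rho j\ \wedge\ j\rho k)\}$. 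Every $k$ with $i\rho k$ arises as such a $k$ (take $j=k$... no --- rather: transitivity is not needed for $\geq$; in fact $\{k\mid \exists j(i\rho j\wedge j\rho k)\}\supseteq$ is the wrong direction). I restate: transitivity gives that whenever $i\rho j$ and $j\rho k$ then $i\rho k$, so $\{k\mid\exists j(i\rho j\wedge j\rho k)\}\subseteq\{k\mid i\rho k\}$, hence the meet over the larger set is $\leq$ the meet over the smaller set, i.e. $G^*(x)(i)=\bigwedge_{i\rho k}x(k)\leq G^*G^*(x)(i)$. The argument for $H^*$ is dual. (The only subtlety is keeping the inclusion and the resulting inequality pointing the right way; that is the ``main obstacle'' here and it is purely bookkeeping.)

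\textbf{Part b).} Here $\rho_G$ is defined by $A\rho_G B$ iff $G(x)/A\leq x/B$ for all $x\in S$, and by Theorem~\ref{maintense} (and its Claim) the induced $([0,1]^T,G^*,H^*)$ satisfies $G^*(x)=G(x)$ and $H^*(x)=H(x)$ after the embedding, with $\rho_H=\rho_G^{-1}$. For (i): assume $G(x)\leq x$ and $H(x)\leq x$ for all $x$. To show $A\rho_G B$ with $B=A$, I must check $G(x)/A\leq x/A$ for all $x\in S$, which is exactly the hypothesis $G(x)\leq x$ evaluated at $A$; so $A\rho_G A$, i.e. $\rho_G$ is reflexive. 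For (ii): assume $G(x)=H(x)$ for all $x$. If $A\rho_G B$ then $G(x)/A\leq x/B$ for all $x$; I want $B\rho_G A$, i.e. $G(x)/B\leq x/A$. From the Claim, $A\rho_G B$ is equivalent to $B\rho_H A$, i.e. $H(x)/B\leq x/A$; since $H=G$ this reads $G(x)/B\leq x/A$, which is $B\rho_G A$. Hence $\rho_G$ is symmetric. For (iii): assume $GG(x)\geq G(x)$ and $HH(x)\geq H(x)$. Via the representation $G^*=G$, transitivity of $\rho_G$ should follow from part a)(iii) read backwards, but more directly: suppose $A\rho_G B$ and $B\rho_G C$; I must show $A\rho_G C$, i.e. $G(x)/A\leq x/C$ for all $x$. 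Apply $A\rho_G B$ to the element $G(x)$: $G(G(x))/A\leq G(x)/B$. Apply $B\rho_G C$ to $x$: $G(x)/B\leq x/C$. Combining, $GG(x)/A\leq x/C$, and by the hypothesis $G(x)\leq GG(x)$ we get $G(x)/A\leq GG(x)/A\leq x/C$, so $A\rho_G C$. The $H$-hypothesis is used symmetrically (or is automatic once one notes $\rho_H=\rho_G^{-1}$, so transitivity of $\rho_G$ is equivalent to transitivity of $\rho_H$).

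The genuinely delicate point in the whole proof is the direction of the inclusions/inequalities in a)(iii) and the interplay in b)(iii) between ``apply the relation to $G(x)$'' and the inequality $G(x)\leq GG(x)$; everything else is immediate from the pointwise formulas, monotonicity, and the already-established identities $G^*=G$, $H^*=H$, $\rho_G=\rho_H^{-1}$. I would therefore present a)(i), a)(ii), b)(i), b)(ii) in one or two lines each, spend a sentence carefully on a)(iii), and then give b)(iii) the two-line chain of inequalities above, remarking that the $H$-parts are dual via the Claim.
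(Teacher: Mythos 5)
Your proposal is correct and follows essentially the same route as the paper: pointwise meet manipulations for a) (including the same subset inclusion $\{k\mid \exists j\,(i\rho j\wedge j\rho k)\}\subseteq\{k\mid i\rho k\}$ for (iii)), and for b) the same use of the definition of $\rho_G$, the Claim $\rho_G=\rho_H^{-1}$ for symmetry, and the chain $G(x)/A\leq GG(x)/A\leq G(x)/B\leq x/C$ for transitivity.
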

\begin{proof}
ai) If the relation $\rho$ is reflexive, then $i\rho i$ yields $G^*(x)(i)=\bigwedge_{i\rho j}x(j)\leq x(i)$ for any $i\in T.$ 
The part for $H^*$ we can prove analogously.

aii) If $\rho$ is symmetric then $G^*(x)(i)=\bigwedge_{i\rho j} x(j)=\bigwedge_{j\rho i} x(j)=H^*(x)(i)$ 
for any $i\in T$ which clearly yields $G^*=H^*.$

aiii) If $\rho $ is transitive then $\{x(k)\, |\, i\rho j\mbox{ and } j\rho k\}\subseteq\{x(k)\, |\, i\rho k\}$ 
and then
\begin{eqnarray*}
G^*G^*(x)(i)&=&\bigwedge_{i\rho j}G^*(x)(j)=\bigwedge_{i\rho j}\bigwedge_{j\rho k}x(k)\\
&=&\bigwedge\{x(k)\, |\, i\rho j\mbox{ and } j\rho k\}
\geq \bigwedge_{i\rho k} x(k)= G^*x(i)
\end{eqnarray*}
holds for any $i\in T.$

b) We remark that relation $\rho_G$ in Theorem \ref{maintense} is defined by
$$A\rho_G B\mbox{ if and only if } G(x)/A\leq x/B\mbox{ for any }x\in A.$$

bi) If $G(x)\leq x$ for any $x\in A$ then $G(x)/A\leq x/A$ holds for any $x\in A$ and thus $A\rho_G A.$ Together $\rho_G$ is reflexive.

bii) The Claim 1 in the proof of Theorem \ref{maintense} shows that $G(x)/A\leq x/B$ for any $x\in A$ if and only if $H(x)/B\leq x/A$ for any $x\in A.$ If $G=H$ holds then $G(x)/A\leq x/B$ for any $x\in A$ if and only if $G(x)/B\leq x/A$ for any $x\in A$ and consequently $A\rho_G B$ holds 
if and only if $B\rho_G A$ holds. Thus the relation $\rho_G$ is symmetric.

biii) Let us suppose that $G(x)\leq GG(x)$ for any $x\in A.$ If $A\rho_G B$ and $B\rho_G C$ 
hold then any $x\in A$ satisfies $G(x)/A\leq GG(x)/A\leq G(x)/B\leq x/C$ which yields $A\rho_G C.$ Thus the relation $\rho_G$ is transitive. 
\end{proof}

\begin{remark} {\rm Note that one can extend the number of fm-functions between 
MV-algebras arbitrarily and our results remain valid.  Similarly as for semi-states in Remark 
\ref{dualsemi} we could 
introduce the notion of a dual (strong) fm-function and}  all the preceding results would also remain valid 
in this dual setting.
\end{remark}

\section{Concluding remarks}
 We have settled a half of the Open problem 5.1 in \cite{7} using a more general approach 
of fm-functions. The remaining part asks about the existence 
of a representation theorem for any tense MV-algebra via Di Nola representation theorem for MV-algebras. We hope that our  results 
will be a next step in obtaining a general representation theorem for  tense MV-algebras. 

We expect that our method can be easily applied to modal or 
similar operators that may be treated as universal quantifiers 
on various types of MV-algebras


\end{document}